\documentclass[12pt]{article}

\usepackage{amsmath}
\usepackage{amssymb}
\usepackage[mathscr]{eucal}
\usepackage{graphicx}
\usepackage{color}

\usepackage{bm}
\usepackage{mathrsfs}
\usepackage{amsthm}
\usepackage{enumerate}
\usepackage[mathscr]{eucal}
\usepackage{eqlist}
\usepackage{graphicx}
\allowdisplaybreaks
\numberwithin{equation}{section}

\newtheorem{Theorem}{\sc Theorem}
\newtheorem{Definition}[Theorem]{\sc Definition}
\newtheorem{Proposition}[Theorem]{\sc Proposition}

\newtheorem{Remark}[Theorem]{\sc Remark}
\newtheorem{Example}[Theorem]{\sc Example}
\newtheorem{Problem}[Theorem]{\sc Problem}

\newcommand{\R}{{\if mm {\rm I}\mkern -3mu{\rm R}\else \leavevmode
        \hbox{I}\kern -.17em\hbox{R} \fi}}

\newcommand{\wto}{ \ \stackrel{w} {\longrightarrow} \ }
\normalbaselineskip=11pt
\def\sqr#1#2{{
        \vcenter{
            \vbox{\hrule height.#2pt
                \hbox{\vrule width.#2pt height#1pt \kern#1pt
                    \vrule width.#2pt
                }
                \hrule height.#2pt
            }
        }
}}

\def\div{\mathop{\rm div}\nolimits}

\def\real{\mathbb{R}}

\def\lista#1
{{ \itemindent 0.0cm \labelsep .2cm \leftmargin 0.8cm \rightmargin
        0.0cm \labelwidth 0.6cm \topsep 0.0mm
        \parsep 0.0mm
        \itemsep 0.0mm
        \begin{list}{}
            { \setlength{\leftmargin}{.8cm} \setlength{\rightmargin}{0.0cm}
                \setlength{\parsep}{0.0mm} \setlength{\topsep}{.0mm}
                \setlength{\parskip}{.0cm} \setlength{\itemsep}{.0cm} }
            {#1}\end{list}} }

\textheight= 24.0 cm 
\textwidth=6.2 true in
\topmargin 0 cm
\hoffset=-0.5cm
\voffset=-2.3cm
\frenchspacing


\begin{document}
\title{
A class of elliptic mixed boundary value problems with $(p,q)$-Laplacian: existence, comparison and optimal control 
}

\author{
Shengda Zeng \footnote{\,Guangxi Colleges and Universities Key Laboratory of Complex System Optimization and Big Data Processing,   Yulin Normal University, Yulin 537000, P.R. China, Department of Mathematics, Nanjing University, Nanjing, Jiangsu, 210093, P.R. China,  and Jagiellonian University in Krakow, Faculty of Mathematics and Computer Science, ul. Lojasiewicza~6, 30348 Krakow, Poland. E-mail address: zengshengda@163.com.
Tel.: +86-18059034172.},
\ \ Stanis{\l}aw Mig\'orski \footnote{\, College of Applied Mathematics, Chengdu University of Information Technology, Chengdu, 610225 Sichuan Province, P.R. China, and Jagiellonian University in Krakow, Chair of Optimization and Control, ul. Lojasiewicza 6, 30348 Krakow, Poland.
E-mail address: stanislaw.migorski@uj.edu.pl.},
\ \ Domingo A. Tarzia \footnote{\,
Depto. Matem\'atica, FCE, Universidad Austral,
Paraguay 1950, S2000FZF Rosario, Argentina, and CONICET, Argentina.  E-mail address: DTarzia@austral.edu.ar
},\\
Lang Zou \footnote{\,
School of Mathematics and Computational Science, Xiangtan University
Xiangtan City 210000, Hunan Province, P. R. China.  Corresponding author.  E-mail address: langzou@xtu.edu.cn
}\ \
 and \ \ Van Thien Nguyen \footnote{\,
		Department of Mathematics, FPT University,
Education zone, Hoa Lac High Tech Park, Km29 Thang Long Highway, Thach That ward,
Hanoi, Vietnam. E-mail address: thiennv.k4@gmail.com}
}

\date{}

\maketitle

\noindent {\bf Abstract.} \
The paper deals with two nonlinear elliptic equations with $(p,q)$-Laplacian
and the Dirichlet-Neumann-Dirichlet (DND) boundary conditions, and Dirich\-let-Neu\-mann-Neumann (DNN) boundary conditions, respectively.
Under mild hypotheses, we prove the unique weak  solvability of the elliptic mixed boundary value problems. Then, a comparison and a monotonicity results for the solutions of elliptic mixed boundary value problems are established.
We examine a convergence result which shows that the solution of (DND) can be approached by the solution of (DNN). Moreover, two optimal control problems governed
by (DND) and (DNN), respectively, are considered,
and an existence result for optimal control problems is obtained. Finally, we provide a result on asymptotic behavior of optimal controls and system states, when a parameter
tends to infinity.

\smallskip

\noindent
{\bf Key words.}
Mixed boundary value problem; optimal control; $(p,q)$-Laplacian;  com\-pa\-ri\-son; sensitivity;
asymptotic behavior.

\smallskip

\noindent
{\bf 2010 Mathematics Subject Classification.} 35J25, 35J66, 35J92, 49J20, 35Bxx.

\section{Introduction}\label{Section1}

Let $\Omega$ be a bounded domain in $\real^N$ ($N\ge 2$)
with a Lipschitz boundary $\Gamma:=\partial \Omega$ which is divided into three measurable
and mutually disjoint parts $\Gamma_1$, $\Gamma_2$, and $\Gamma_3$ such that
$\Gamma_1$ is of positive measure.
Let $1< q<p<+\infty$, $\alpha$, $\beta$, $\mu>0$,  $b>0$ and $\theta<p^*$,
where $p^*$ is the critical exponent to $p$
(see (\ref{defp1*}) in Section~\ref{Section2}).
Given functions
$g\colon \Omega\to\real$,
$r\colon \Gamma_2\to \real$ and
$l\colon \Gamma_3\times \real\to \real$,
in the paper we are interested in
the investigation of the following mixed boundary value problems involving $(p,q)$-Laplacian operator:
\begin{Problem}\label{problemss1}
Find $u\colon \Omega \to \real$ such that
\begin{align*}
&-\Delta_p u(x)-\mu\Delta_qu(x)
+\beta|u(x)|^{\theta-2}u(x)=g(x)
&\mbox{\rm in} \ & \ \ \Omega, \\[1mm]
&u=0
&\mbox{\rm on} \ & \ \ \Gamma_1,\\
&
-\frac{\partial_{(p,q)} u}{\partial\nu}=r(x)
&\mbox{\rm on} \ & \ \ \Gamma_2,\\
&u=b
&\mbox{\rm on} \ & \ \ \Gamma_3,
\end{align*}
\end{Problem}
\noindent
and
\begin{Problem}\label{problemss2}
Find $u\colon \Omega \to \real$ such that
\begin{align*}
&-\Delta_p u(x)-\mu\Delta_qu(x)+\beta|u(x)|^{\theta-2}u(x)=g(x)
&\mbox{\rm in} \ & \ \ \Omega,\\
&u=0
&\mbox{\rm on} \ & \ \ \Gamma_1,\\
&
	-\frac{\partial_{(p,q)} u}{\partial\nu}=r(x)
&\mbox{\rm on} \ & \ \ \Gamma_2,\\
&
-\frac{\partial_{(p,q)} u}{\partial\nu}
=\alpha \, l(x,u(x))
&\mbox{\rm on} \ & \ \ \Gamma_3,
\end{align*}
where $\Delta_{p}$ denotes the $p$-Laplace differential operator of the form
\begin{equation*}
\Delta_{p}u=\div (|\nabla u|^{p-2}\nabla u)
\ \ \mbox{\rm for all} \ \ u\in W^{1,p}(\Omega),
\end{equation*}
$\nu$ is the outward unit normal at the boundary $\Gamma$, and
\begin{equation*}
\frac{\partial_{(p,q)} u}{\partial \nu}
:=\left(|\nabla u|^{p-2}\nabla u+\mu |\nabla u|^{q-2}\nabla u, \nu \right)_{\real^N}.
\end{equation*}
\end{Problem}

The weak solutions to Problems~\ref{problemss1} and~\ref{problemss2} are understood as follows.
\begin{Definition}\label{DEF1}
We say that

\smallskip

\noindent
{\rm(i)} a function $u\colon \Omega\to \real$ is a weak solution of Problem~$\ref{problemss1}$, if $u\in W^{1,p}(\Omega)$ is such that $u=0$ on $\Gamma_1$, $u=b$ on $\Gamma_3$ and
\begin{align*}
&\int_\Omega\left(|\nabla u(x)|^{p-2}\nabla u(x)+\mu|\nabla u(x)|^{q-2}\nabla u(x),\nabla v(x)\right)_{\real^N}\,dx\\
&\quad +\int_\Omega\beta|u(x)|^{\theta-2}u(x)v(x)\,dx
=\int_\Omega g(x)v(x)\,dx-\int_{\Gamma_2}r(x)v(x)\,d\Gamma
\end{align*}
for all $v\in W^{1,p}(\Omega)$ with $v=0$ on $\Gamma_1\cup\Gamma_3$,
		
\smallskip
	
\noindent
{\rm(ii)} a function $u\colon \Omega\to \real$ is a weak solution of Problem~$\ref{problemss2}$, if $u\in W^{1,p}(\Omega)$ satisfies $u=0$ on $\Gamma_1$ and
\begin{align*}
&\int_\Omega\left(|\nabla u(x)|^{p-2}\nabla u(x)+\mu|\nabla u(x)|^{q-2}\nabla u(x),\nabla v(x)\right)_{\real^N}\,dx+\int_\Omega\beta|u(x)|^{\theta-2}u(x)v(x)\,dx\\
&\quad
+\alpha\int_{\Gamma_3}l(x,u(x))v(x)\,d\Gamma = \int_\Omega g(x)v(x)\,dx-\int_{\Gamma_2}r(x)v(x)\,d\Gamma
\end{align*}
for all $v\in W^{1,p}(\Omega)$ with $v=0$ on $\Gamma_1$.
\end{Definition}

The main feature of our research contains two perspectives.
First, we deal with problems with mixed boundary value conditions and $(p,q)$-Laplacian operator.
Note that $(p,q)$-Laplace operator with $1<q<p$ is  the sum of a $p$-Laplacian and a $q$-Laplacian,
so the energy functional $I(u)$ corresponding to the $(p,q)$-Laplace operator defined by
\begin{equation*}
I(u):=\int_\Omega
\left(
\frac{\|\nabla u\|^p}{p}
+\frac{\|\nabla u\|^q}{q} \right) \,dx
\ \ \mbox{for all} \ \ u\in W^{1,p}(\Omega),
\end{equation*}
is mainly controlled by the exponent $q$
if $u\in B_1(0):=\{u\in W^{1,p}(\Omega)\mid
\|\nabla u\|_{L^p(\Omega;\real^N)}\le 1\}$,
or by the exponent $p$ when $u\in W^{1,p}(\Omega)\setminus B_1(0)$.
This structure impels the huge potential applications
of $(p,q)$-Laplacian in diverse fields, for instance,
it can used to describe exactly the geometry of composites made of two different materials with distinct power hardening exponents. Second perspective concerns
applications in which mixed boundary value problems
are a powerful mathematical tool.
They have been widely applied to explain various complicated natural phenomena and to solve a lot of engineering problems, for instance, contact mechanics problems, semipermeablity problems, and free boundary problems.
The research of mixed boundary value problems with or without $(p,q)$-Laplacian can be found in
Alves-Assun\c{c}ao-Miya\-ga\-ki~\cite{Alves},
Axelsson-Keith-McIntosh~\cite{Axelsson-Keith-McIntosh},
Bai-Papageorgiou-Zeng~\cite{Bai-Papageorgiou-Zeng2021MZ},
Barboteu-Bartosz-Han-Janiczko~\cite{BBHT2015SIAM}, Zeng-Bai-Gasi\'nski-Winkert~\cite{Zeng-Bai-Gasinski-Winkert-2020},
Duvaut-Lions~\cite{DL},
Figueiredo~\cite{Fig},
Gasi\'nski-Papageorgiou~\cite{Gasinski-Papageorgiou2017CVPDE},
Gasi\'nski-Winkert~\cite{Gasinski-Winkert2020JDE},
Han~\cite{WeiminHanSIAM2020},
Liu-Motreanu-Zeng~\cite{Liu-Motreanu-Zeng2019CVPDE,Liu-Motreanu-Zeng2021SIOP},
Maz'ya-Rossmann~\cite{Mazya}, Zeng-R\v adulescu-Winkert~\cite{Zeng-Radulescu-Winkert-SIMA2022},
Mig\'orski-Khan-Zeng~\cite{MKZ2019IV,MKZ2020IV},
Mihailes\-cu-R\v adulescu~\cite{Mihailescu-Radulescu2011IJM},
Mitrea~\cite{Mitrea},
Papageorgiou-Qin-R\v adulescu~\cite{Papageorgiou-Qin-Radulescu2021BSM}, Liu-Papageorgiou~\cite{Liu-Papageorgiou-ANONA},
Papageorgiou-R\v adules\-cu-Repov\v s~\cite{Papageorgiou-Radulescu-Repovs2021Nonlinearity,
Papageorgiou-Radulescu-Repovs2020JMPA} and  Yu-Feng~\cite{Yu-Feng-ANONA}.
Results on convergence of optimal solutions in
optimal control problems can be found in
Denkowski-Mig\'orski~\cite{DENMIG87}, Gariboldi-Tarzia~\cite{Gariboldi-Tarzia-2003-AMO},
Denkowski-Mortola~\cite{DENMOR}, Zeng-Mig\'orski-Liu~\cite{Zeng-Migorski-Liu2021SIOPT}, Mig\'orski~\cite{Migorski95,Migorski_ANN95}, Denkowski-Mig\'orski-Papageorgiou~\cite[Section~4.2]{DMP2}, Liu-Mig\'orski-Nguyen-Zeng~\cite{Liu-Migorski-Nguyen-Zeng-AMSC2021}, Zeng-Mig\'orski-Khan~\cite{Zeng-Migorski-Khan-2021}
and the references therein.

The purpose of this paper is fourfold.
The first goal is to prove the unique weak solvability of Problems~\ref{problemss1} and~\ref{problemss2}
by applying a surjectivity theorem for pseudomonotone operators.
The second purpose is to establish a comparison principle and a monotonicity result for solutions of Problems~\ref{problemss1} and~\ref{problemss2}.
The third aim is to deliver a convergence result which shows that the solution of Problem~\ref{problemss1} can be approached by
the solution of Problem~\ref{problemss2},
as $\alpha\to \infty$.
Moreover, our last intention is to investigate two optimal control problems, Problems~\ref{problemss3} and~\ref{problemss4}, and to examine the asymptotic behavior of optimal solutions
(i.e., control-state pairs) and of minimal values
for Problem~\ref{problemss4}, when parameter $\alpha$
in the boundary condition, representing for instance a heat transfer coefficient, tends to infinity.

The rest of the paper is organized as follows.
In Section~\ref{Section2}, we recall basic notation and collect the necessary preliminary material.
Section~\ref{Section3} is devoted to the proof of  existence and uniqueness of solutions to Problems~\ref{problemss1} and~\ref{problemss2}, and to discuss a comparison principle as well as a convergence result to Problems\ref{problemss1} and~\ref{problemss2}. Finally, in Section~\ref{Section4}, we introduce two optimal control problems governed by Problems~\ref{problemss1} and Problem~\ref{problemss2}, respectively, and explore the asymptotic behavior of the optimal controls and system states
to Problem~\ref{problemss4}.

\section{Mathematical background}\label{Section2}

In this section, we review some basic notation, definitions and the necessary preliminary material, which will be used in next sections.
More details can be found, for instance, in~\cite{DMP2,smo1,Papageorgiou-Radulescu-Repovs2019,SMHistory}.

Let $(Y,\|\cdot\|_Y)$ be a Banach space and
$Y^*$ stand for the dual space to $Y$.
We denote by $\langle\cdot,\cdot\rangle$ the duality brackets for the pair of $Y^*$ and $Y$.
Everywhere below, the symbols $\wto$ and $\to$
represent the weak and strong convergences,
respectively.
We say that a mapping $F\colon Y\to Y^{*}$
is
\begin{itemize}
\item[(i)] monotone, if
\begin{equation*}
\langle F u - F v, u - v\rangle\ge0
\ \ \mbox{for all}\ \ u, v\in Y,
\end{equation*}
\item[(ii)] strictly monotone, if
\begin{equation*}
\langle F u - F v, u-v\rangle>0
\ \ \mbox{for all}\ \ u, v\in Y
\ \mbox{with} \ u\neq v,
\end{equation*}
\item[(iii)] of type $(S)_+$ (or $F$ satisfies  $(S_+)$--property),
if for any sequence $\{u_n\} \subset Y$ with
$u_n\wto u$ in $Y$ as $n\to\infty$ for some $u \in Y$
and $\displaystyle
\limsup_{n\to\infty} \langle Fu_n, u_n-u\rangle\le 0$, then the sequence $\{u_n\}$ converges strongly to $u$ in $Y$,
\item[(iv)] pseudomonotone, if it is bounded and for every sequence $\{ u_n \} \subseteq Y$ converging weakly to $u \in Y$ with
$\displaystyle
\limsup_{n\to\infty} \langle F u_n, u_n - u \rangle \le 0$, then
$$
\displaystyle
\langle F u, u - v \rangle \le \liminf_{n\to\infty}  \langle F u_n, u_n - v \rangle
\ \ \mbox{for all}\ \ v \in Y,
$$
\item[(v)]
coercive, if
$$
\lim_{\| v \|_Y \to \infty}
\frac{\langle Fv, v \rangle}{\|v\|_Y} = +\infty.
$$
\end{itemize}

It is not difficult to see that if $F$ is of type $(S)_+$, then $F$ is pseudomonotone as well. Note that the operator $F \colon Y \to Y^*$ is pseudomonotone
if and only if it is bounded and $y_n \to y$ weakly in $Y$ with
$\displaystyle
\limsup_{n\to\infty}  \langle F y_n, y_n - y \rangle \le 0$
entails
$\displaystyle
\lim_{n\to\infty}  \langle F y_n, y_n - y \rangle = 0$ and $F y_n \to F y$ weakly in $Y^*$.
Furthermore, if $F \in {\mathcal L}(Y, Y^*)$
(the class of linear and bounded operators)
is nonnegative, then it is pseudomonotone.

\begin{Theorem}\label{pseudomonotheorem}
Let $Y$ be a Banach space, and $F$, $G\colon Y\to Y^*$. Then, we have

\smallskip

{\rm(i)} if $F$
is bounded, hemicontinuous, and monotone,
then $F$ is pseudomonotone,

\smallskip

{\rm(ii)} if $F$ and $G$ are pseudomonotone,
then $F+G$ is also pseudomonotone.
\end{Theorem}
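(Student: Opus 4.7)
The plan for part (i) is the classical Minty--Browder argument. Suppose $u_n \wto u$ in $Y$ with $\limsup_{n\to\infty}\langle Fu_n, u_n - u\rangle \le 0$. Monotonicity of $F$ immediately gives $\langle Fu_n, u_n - u\rangle \ge \langle Fu, u_n - u\rangle$, and the right-hand side tends to $0$ by weak convergence, so combined with the hypothesis we obtain $\lim_{n\to\infty}\langle Fu_n, u_n - u\rangle = 0$. Since $F$ is bounded and $\{u_n\}$ is bounded, $\{Fu_n\}$ is bounded in $Y^*$, so I would extract a weakly convergent subsequence $Fu_{n_k} \wto \xi$.

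To identify $\xi = Fu$, I apply Minty's trick: for fixed $w \in Y$ and $t \in (0,1)$, set $u_t := u + t(w - u)$ and use monotonicity at $u_{n_k}$ and $u_t$, namely $\langle Fu_{n_k} - Fu_t, u_{n_k} - u_t\rangle \ge 0$. Expanding $u_{n_k} - u_t = (u_{n_k} - u) + t(u - w)$ and passing to the limit in $k$ (using $u_{n_k} \wto u$, $Fu_{n_k} \wto \xi$, and the already established $\lim \langle Fu_{n_k}, u_{n_k} - u\rangle = 0$) yields $t\langle \xi - Fu_t, u - w\rangle \ge 0$. Dividing by $t$ and letting $t \to 0^+$, hemicontinuity gives $\langle \xi - Fu, u - w\rangle \ge 0$ for every $w \in Y$, so $\xi = Fu$. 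A standard subsequence argument then promotes this to $Fu_n \wto Fu$. Writing $\langle Fu_n, u_n - v\rangle = \langle Fu_n, u_n - u\rangle + \langle Fu_n, u - v\rangle$ and passing to the limit termwise yields convergence to $\langle Fu, u-v\rangle$ for every $v \in Y$, which is stronger than the required $\liminf$ inequality.

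For part (ii), boundedness of $F+G$ is immediate, and I expect the main obstacle to be that the hypothesis $\limsup_{n\to\infty}\langle (F+G)u_n, u_n - u\rangle \le 0$ does not split directly into $\limsup \langle Fu_n, u_n - u\rangle \le 0$ and $\limsup \langle Gu_n, u_n - u\rangle \le 0$. My plan is to establish the splitting by contradiction. Suppose $\limsup \langle Fu_n, u_n - u\rangle > 0$; after extraction I may assume $\langle Fu_n, u_n - u\rangle \to \ell_F > 0$, and then necessarily $\limsup \langle Gu_n, u_n - u\rangle \le -\ell_F < 0$. Along a further subsequence $\langle Gu_n, u_n - u\rangle \to \ell_G \le -\ell_F < 0$. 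Pseudomonotonicity of $G$ then applies on this subsequence, and taking $v = u$ in its conclusion gives
\begin{equation*}
0 = \langle Gu, u - u\rangle \le \liminf_{n\to\infty} \langle Gu_n, u_n - u\rangle = \ell_G < 0,
\end{equation*}
a contradiction. Symmetrically, $\limsup \langle Gu_n, u_n - u\rangle \le 0$, so both individual bounds hold.

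Once the splitting is achieved, I would apply pseudomonotonicity of $F$ and of $G$ separately to obtain, for every $v \in Y$, the inequalities $\langle Fu, u - v\rangle \le \liminf \langle Fu_n, u_n - v\rangle$ and $\langle Gu, u - v\rangle \le \liminf \langle Gu_n, u_n - v\rangle$. Summing and using $\liminf a_n + \liminf b_n \le \liminf(a_n + b_n)$ yields $\langle (F+G)u, u - v\rangle \le \liminf \langle (F+G)u_n, u_n - v\rangle$, which completes the proof.
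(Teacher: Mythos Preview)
The paper does not prove this theorem; it is stated in Section~\ref{Section2} as standard background material, with implicit reference to the cited monographs (e.g., \cite{DMP2,smo1}). Your argument is the classical one found in those references and is correct.

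One minor technical remark on part~(i): your extraction of a weakly convergent subsequence $Fu_{n_k}\wto\xi$ in $Y^*$ tacitly uses reflexivity of $Y$, since bounded sets in $Y^*$ are weakly sequentially compact only when $Y^*$ (hence $Y$) is reflexive. The theorem as stated in the paper says only ``Banach space,'' but the equivalent characterization given just before it (involving $Fy_n\wto Fy$ in $Y^*$) and all the applications make clear that the reflexive setting is intended, so this is not a genuine gap. If you wanted to avoid the issue entirely, the Minty trick can be run directly on the $\liminf$ without extracting a weak limit of $\{Fu_n\}$: from monotonicity $\langle Fu_n, u_n - v_t\rangle \ge \langle Fv_t, u_n - v_t\rangle$ with $v_t=u+t(w-u)$, and since you already know $\lim\langle Fu_n,u_n-u\rangle=0$, passing to the $\liminf$ and then letting $t\to 0^+$ by hemicontinuity gives $\liminf\langle Fu_n,u_n-w\rangle\ge\langle Fu,u-w\rangle$ directly. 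Your argument for part~(ii) is the standard contradiction-and-splitting proof and is fine as written.
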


The class of pseudomonotone and coercive operators
enjoys the well-known surjectivity property.
\begin{Theorem}\label{surjecttheorem}
Let $Y$ be a Banach space and $F\colon Y\to Y^*$ be pseudomonotone and
coercive. Then $F$ is surjective, i.e., for any $f\in Y^*$, there is at least one solution to
the equation $Fu= f$.
\end{Theorem}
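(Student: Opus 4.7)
The plan is to prove the theorem by the classical Galerkin approximation method, following the Browder--Brezis scheme, under the (implicit but standard) assumption that $Y$ is reflexive, which holds in all applications in this paper where $Y$ is a Sobolev space. Fix $f\in Y^*$; I wish to produce $u\in Y$ with $Fu=f$. First I would choose an increasing sequence of finite-dimensional subspaces $Y_1\subset Y_2\subset\cdots$ of $Y$ whose union $\bigcup_n Y_n$ is dense in $Y$. For each $n$, define $F_n\colon Y_n\to Y_n^*$ by $\langle F_nu,v\rangle=\langle Fu,v\rangle$ for $u,v\in Y_n$. Pseudomonotonicity implies demicontinuity of $F$, which on the finite-dimensional $Y_n$ reduces to full continuity of $F_n$, while coercivity of $F$ passes to $F_n$. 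A classical consequence of Brouwer's fixed point theorem (a continuous coercive map on a finite-dimensional normed space is surjective) then yields $u_n\in Y_n$ with
\begin{equation*}
\langle Fu_n,v\rangle=\langle f,v\rangle\quad\mbox{for every } v\in Y_n.
\end{equation*}

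Testing this identity against $v=u_n$ and invoking the coercivity of $F$ produces a uniform bound for $\|u_n\|_Y$. By reflexivity I extract a subsequence, still labelled $\{u_n\}$, such that $u_n \wto u$ in $Y$ for some $u\in Y$. The crucial step is then to establish
\begin{equation*}
\limsup_{n\to\infty}\langle Fu_n,u_n-u\rangle\le 0.
\end{equation*}
To do this, for each $k$ I would pick $w_k\in Y_k$ with $w_k\to u$ strongly in $Y$, which is possible by the density of $\bigcup_n Y_n$. For $n\ge k$ one has $u_n-w_k\in Y_n$, so the Galerkin identity gives
\begin{equation*}
\langle Fu_n,u_n-u\rangle=\langle f,u_n-w_k\rangle+\langle Fu_n,w_k-u\rangle.
\end{equation*}
Boundedness of $\{Fu_n\}$ in $Y^*$ (every pseudomonotone operator is bounded by definition) together with $u_n\wto u$ and $w_k\to u$ allows one to send $n\to\infty$ and then $k\to\infty$ to obtain the desired inequality.

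Applying the pseudomonotonicity of $F$, in the equivalent form recalled in the excerpt ($Fu_n\wto Fu$ in $Y^*$ and $\langle Fu_n,u_n-u\rangle\to 0$), I would pass to the limit in the Galerkin identity with a fixed $v\in Y_k$ to conclude $\langle Fu,v\rangle=\langle f,v\rangle$ first on $\bigcup_k Y_k$ and then, by density and continuity of $\langle Fu,\cdot\rangle$, on all of $Y$, so that $Fu=f$. The main obstacle is the $\limsup$ inequality: the whole argument hinges on combining the Galerkin orthogonality relation $\langle Fu_n-f,v\rangle=0$ for $v\in Y_n$ with strong approximants $w_k$ of the weak limit $u$, so as to trade the uncontrollable term $\langle Fu_n,u_n\rangle$ for quantities that either vanish by the test relation or can be estimated by the boundedness of $\{Fu_n\}$.
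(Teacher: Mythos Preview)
The paper does not supply a proof of this theorem: it appears in Section~\ref{Section2} as background material, with the reader referred to the monographs~\cite{DMP2,smo1,Papageorgiou-Radulescu-Repovs2019,SMHistory}. Your proposal is exactly the classical Browder--Brezis Galerkin argument by which the result is proved in those references, and the sketch is correct in its essentials.

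Two minor points are worth noting. You correctly flag that reflexivity of $Y$, though absent from the statement, is needed for the weak-compactness step. In addition, the Galerkin scheme as you describe it---an increasing sequence of finite-dimensional subspaces whose union is dense---presupposes that $Y$ is separable, which you do not mention. The nonseparable reflexive case can be reduced to the separable one by a standard device (restricting to a suitable separable closed subspace generated along the iteration), so this is not a real obstruction, but it would be worth acknowledging in a complete write-up.
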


Let $\Omega\subset \real^N$  be a bounded domain such that its Lipschitz boundary $\Gamma=\partial \Omega$ is divided into three measurable and mutually disjoint parts $\Gamma_1$, $\Gamma_2$, and $\Gamma_3$, and $\Gamma_1$ has a positive measure.
Let $1< p<+\infty$ and
$p'>1$ be the conjugate exponent of $p$, i.e., $\frac{1}{p}+\frac{1}{p'}=1$.
In the sequel, we denote by $p^*$ the critical exponent to $p$ given by
\begin{equation}\label{defp1*}
p^*=
\begin{cases}
\frac{Np}{N-p} &\text{ if }\ p<N,\\
+\infty &\text{ if }\ p\ge N.
\end{cases}
\end{equation}
Throughout the paper, the norms of the Lebesgue space $L^p(\Omega)$ and Sobolev space $W^{1,p}(\Omega)$ are defined by
\begin{equation*}
\|u\|_{L^p(\Omega)}:=\left(\int_\Omega |u(x)|^p\,dx\right)^\frac{1}{p}
\ \ \mbox{for all}\ \ u\in L^p(\Omega),
\end{equation*}
and
\begin{equation*}
\|u\|_{W^{1,p}(\Omega)}:=\Big(\|u\|_{L^p(\Omega)}^p
+\|\nabla u\|_{L^p(\Omega;\real^N)}^p\Big)^{1/p}
\ \ \mbox{for all} \ \ u\in W^{1,p}(\Omega),
\end{equation*}
respectively.
We introduce a subspace $V$ of $W^{1,p}(\Omega)$ given by
\begin{equation*}
V:=\{u\in W^{1,p}(\Omega)\,\mid\,u=0\mbox{ on $\Gamma_1$}\}.
\end{equation*}
From the fact
that $\Gamma_1$ has a positive measure and by
the Poincar\'e inequality, it follows that
$V$ endowed with the norm
\begin{equation*}
\|u\|_V:=\left(\int_\Omega|\nabla u|^p\,dx\right)^\frac{1}{p}
\ \ \mbox{for all} \ \ u\in V
\end{equation*}
is a reflexive Banach space.
Further, we consider the subsets $K$ and $K_0$ of $V$ defined by
\begin{eqnarray}
&&
K:=\{ u\in V \mid u=b
\ \ \mbox{on} \ \ \Gamma_3 \},
\label{KKK}
\\[1mm]
&&
K_0:=\{ u\in V \mid u=0
\ \ \mbox{on} \ \ \Gamma_3 \}, \label{KK0}
\end{eqnarray}
respectively, where $b > 0$ is given in Problem~\ref{problemss1}.

We end the section with the nonlinear operator
$A \colon V \to V^*$ defined by
\begin{equation}\label{operator_representation}
\langle Au, v\rangle=
\int_\Omega\left(|\nabla u(x)|^{p-2}\nabla u(x)
+\mu\, |\nabla u(x)|^{q-2}\nabla u(x),\nabla v(x)\right)_{\real^N}\,dx
\end{equation}
for all $u$, $v \in V$.
The following result summarizes the main properties of this map (see, e.g.,~\cite[Chapter~3, Example~1.7, p. 303]{HP}).
\begin{Proposition}\label{proA}
Let $\mu>0$ and $1<q<p<+\infty$.
Then, the operator $A \colon V\to V^*$ defined by $(\ref{operator_representation})$
is bounded, continuous, strictly monotone (hence maximal monotone) and of type $(S_+)$.
\end{Proposition}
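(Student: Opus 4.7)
My plan is to decompose $A = A_p + \mu A_q$, where
\[
\langle A_r u, v\rangle := \int_\Omega\bigl(|\nabla u|^{r-2}\nabla u,\nabla v\bigr)_{\real^N}\,dx
\quad\text{for }r\in\{p,q\},
\]
and to verify the four listed properties on $V$. Here $A_q$ is well defined on $V$ thanks to the continuous embedding $V\hookrightarrow W^{1,q}(\Omega)$, which follows from $q<p$ and $|\Omega|<\infty$. I will treat the two summands in parallel.

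\textbf{Boundedness and continuity.} A double use of H\"older's inequality yields
\[
|\langle Au,v\rangle|\le \|u\|_V^{p-1}\|v\|_V+\mu\, c\,\|u\|_V^{q-1}\|v\|_V
\]
for some $c=c(p,q,|\Omega|)>0$, giving boundedness. For continuity, if $u_n\to u$ in $V$ then $\nabla u_n\to\nabla u$ in $L^p(\Omega;\real^N)$; passing to a subsequence we have a.e.\ convergence, and the Nemitsky map $\xi\mapsto|\xi|^{r-2}\xi$ is continuous with the Lebesgue majorant $(|\nabla u_n|+|\nabla u|)^{r-1}\in L^{r'}(\Omega)$, so dominated convergence gives $|\nabla u_n|^{r-2}\nabla u_n\to |\nabla u|^{r-2}\nabla u$ in $L^{r'}(\Omega;\real^N)$ for $r\in\{p,q\}$; a standard subsequence-of-subsequence argument then upgrades this to convergence of the whole sequence, i.e., $Au_n\to Au$ in $V^*$.

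\textbf{Strict and maximal monotonicity.} Each summand is monotone by the classical pointwise inequality
\[
\bigl(|\xi|^{r-2}\xi-|\eta|^{r-2}\eta,\xi-\eta\bigr)_{\real^N}\ge 0 \qquad \forall\,\xi,\eta\in\real^N,\ r>1,
\]
which is strict when $\xi\ne\eta$. Thus $\langle Au-Av,u-v\rangle = 0$ forces $\nabla u=\nabla v$ a.e.\ in $\Omega$; since $u,v\in V$ both vanish on $\Gamma_1$ with $|\Gamma_1|>0$, the Poincar\'e inequality gives $u=v$, proving strict monotonicity. Maximal monotonicity follows from Minty's theorem, since $A$ is monotone, everywhere defined, and (a fortiori) hemicontinuous.

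\textbf{The $(S_+)$-property.} This is the main obstacle. Assume $u_n\wto u$ in $V$ with $\limsup_{n\to\infty}\langle Au_n,u_n-u\rangle\le 0$. Since $u_n\wto u$ gives $\langle Au,u_n-u\rangle\to 0$, monotonicity forces
\[
\lim_{n\to\infty}\langle Au_n-Au,u_n-u\rangle = 0.
\]
Because the $p$- and $q$-integrands of this expression are pointwise nonnegative, each of the two contributions vanishes in the limit. In the range $p\ge 2$, the inequality $\bigl(|\xi|^{p-2}\xi-|\eta|^{p-2}\eta,\xi-\eta\bigr)\ge c_p|\xi-\eta|^p$ directly yields $\|u_n-u\|_V^p\to 0$. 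In the more delicate regime $1<p<2$, I combine the refined bound
\[
|\xi-\eta|^p \le C_p\bigl[\bigl(|\xi|^{p-2}\xi-|\eta|^{p-2}\eta,\xi-\eta\bigr)\bigr]^{p/2}(|\xi|+|\eta|)^{p(2-p)/2}
\]
with H\"older's inequality of conjugate exponents $2/p$ and $2/(2-p)$, exploiting the $V$-boundedness of $\{u_n\}$ (which follows from weak convergence), to conclude $\|u_n-u\|_V\to 0$ and finish the proof.
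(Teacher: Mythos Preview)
Your proof is correct and complete. Note, however, that the paper does not actually prove this proposition: it simply states the result and cites \cite[Chapter~3, Example~1.7, p.~303]{HP} for the verification. So there is no ``paper's own proof'' to compare against beyond that reference.

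What you have done is supply the self-contained argument that the paper outsources. Your decomposition $A=A_p+\mu A_q$ and the direct verification of each property via the standard vector inequalities for the $r$-Laplacian (treating the regimes $r\ge 2$ and $1<r<2$ separately for the $(S_+)$-step) is exactly the classical route one finds in references of this type. One minor stylistic remark: in the continuity step, rather than building an $n$-dependent majorant and then invoking a subsequence-of-subsequence trick, you can appeal directly to the Krasnosel'ski\u{\i} theorem on continuity of Nemytskii operators between Lebesgue spaces, which packages this argument cleanly; but what you wrote is fine as is.
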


\section{Existence, uniqueness and con\-ver\-gen\-ce results}\label{Section3}

This section is devoted to study the unique solvability
of Problems~\ref{problemss1} and~\ref{problemss2}.
We discuss a comparison principle which reveals
the essential relations between the unique weak solutions of Problems~\ref{problemss1} and~\ref{problemss2} as well as the constant $b>0$.
We also establish a monotonicity property
of solution to Problem~\ref{problemss2} with respect to the parameter $\alpha$, and
obtain a convergence result which shows that the unique solution to Problem~\ref{problemss1} can be approached by the unique solution to Problem~\ref{problemss2}
when the parameter $\alpha $
tends to infinity.

Let us consider the nonlinear operators
$B$, $L\colon V\to V^*$ defined
by
\begin{equation}\label{eqnss3.2}
\langle Bu,v\rangle
:=\int_\Omega\beta|u(x)|^{\theta-2}u(x)v(x)\,dx
\ \ \mbox{for all} \ \ u, v\in V,
\end{equation}
and
\begin{equation}\label{eqnss3.021}
\langle Lu,v\rangle
:=\int_{\Gamma_3}l(x,u(x)) v(x)\,d\Gamma
\ \ \mbox{for all}\ \ u, v\in V,
\end{equation}
respectively. By the Riesz representation theorem, we introduce the function  $f\in V^*$ defined by
\begin{equation}\label{eqnss3.11}
\langle f,v\rangle=\int_\Omega g(x)v(x)\,dx-\int_{\Gamma_2}r(x)v(x)\,d\Gamma
\ \ \mbox{for all}\ \ v\in V.
\end{equation}
Using the notation above, it is not difficult to see that Definition~\ref{DEF1} can be equivalently
rewritten as follows:
\lista{\it
	\item[$\rm (i)'$] a function
	$u_\alpha\in V$ is called to be a weak solution of Problem~$\ref{problemss2}$ associated with $\alpha>0$, if it satisfies
	\begin{equation}\label{eqnss3.12}
	\langle Au_\alpha+Bu_\alpha,v\rangle
	+\alpha\, \langle Lu_\alpha,v\rangle
	= \langle f,v\rangle
	\ \ \mbox{\rm for all} \ \ v \in V,
	\end{equation}
	\item[$\rm (ii)'$] a function
	$u_\infty\in K$ is called to be a weak solution of Problem~$\ref{problemss1}$, if
	\begin{equation}\label{eqnss3.13}
	\langle A u_\infty + B u_\infty, v\rangle
	= \langle f, v\rangle
	\ \ \mbox{\rm for all} \ \ v \in K_0,
	\end{equation}
	where
	$K$ and $K_0$ are given by $(\ref{KKK})$ and $(\ref{KK0})$, respectively.
}

\medskip

We assume that the function $l$ in the operator $L$ satisfies the following hypotheses.

\smallskip

\noindent
${\underline{H(l)}}$: \ \ $l\colon \Gamma_3\times \real\to \real$ is a Carath\'eodory function
(i.e., for all $s\in \real$, the function $x\mapsto l(x,s)$ is measurable,
and for a.e. $x\in \Gamma_3$,
$s\mapsto l(x,s)$ is continuous) such that

\smallskip

\lista{
\item[(i)] there exist $a_l\in L^{p'}_+(\Gamma_3)$ and $b_l>0$ satisfying
\begin{equation*}
|l(x,s)|\le a_l(x)+b_l\, (1+|s|^{p-1})
\end{equation*}
for all $s\in\real$ and a.e. $x\in \Gamma_3$,
\smallskip
\item[(ii)] for a.e. $x\in \Gamma_3$, $s\mapsto l(x,s)$ is nondecreasing, i.e., it satisfies
\begin{equation*}
(l(x,s_1)-l(x,s_2))(s_1-s_2)\ge 0
\end{equation*}
for all $s_1,s_2\in \real$ and a.e. $x\in \Gamma_3$,
\smallskip
\item[(iii)] for a.e. $x\in \Gamma_3$, $l(x,s)=0$ if and only if $s=b$.
}

We next give a concrete example for function $l$ which satisfies hypotheses $H(l)$.
\begin{Example}
Let $b>0$ be given in Problem~$\ref{problemss1}$
and ${\rm sgn} \colon \real\to \{-1,0,1\}$ be the sign function, namely,
\begin{eqnarray*}
{\rm sgn}(s):=\left\{\begin{array}{lll}
1&\mbox{\rm if $s>0$},\\
0&\mbox{\rm if $s=0$},\\
-1&\mbox{\rm if $s<0$}.
\end{array}\right.
\end{eqnarray*}
Also, let $1<p<+\infty$ and
$\omega\in L^\infty(\Gamma_3)$, $\omega > 0$. Then, the function $l\colon\Gamma_3\times \real\to \real$  defined by
\begin{equation*}
l(x,s)=\omega(x)\, {\rm sgn}(s-b)\, |s-b|^{p-1}
\ \ \mbox{\rm for all} \ \ s\in\real\ \ \mbox{\rm and}
\ \ x\in\Gamma_3,
\end{equation*}
satisfies hypotheses $H(l)$.
\end{Example}

Besides, we need the following assumption.

\smallskip

\noindent
${\underline{H(0)}}$: \ \
$g\in L^{p'}(\Omega)$ with $g\le 0$ in $\Omega$,
$r\in L^{p'}(\Gamma_2)$ with $r\ge 0$ on $\Gamma_2$,
and $b>0$.

\smallskip

The main results on existence, uniqueness, comparison, monotonicity and convergence to Problem~\ref{problemss2}
are provided in the following theorem.
\begin{Theorem}\label{theorem1}
Assume that $H(l)$ and $H(0)$ are fulfilled.
Then, we have
\begin{itemize}
\item[{\rm (i)}]
Problem~$\ref{problemss1}$ has a unique solution $u_\infty\in K$,
\item[{\rm (ii)}]
for every $\alpha>0$, Problem~$\ref{problemss2}$
has a unique solution $u_\alpha\in V$,
\item[{\rm (iii)}]
$u_\infty\le b$ in $\Omega$,
\item[{\rm (iv)}]
for every $\alpha>0$, it holds $u_\alpha\le b$ in $\Omega$ and $u_\alpha\le b$ on $\Gamma_3$,
\item[{\rm (v)}]
for every $\alpha>0$, it holds $u_\alpha\le u_\infty$ in $\Omega$,
\item[{\rm (vi)}]
if $0<\alpha_1\le \alpha_2$, then
$u_{\alpha_1}\le u_{\alpha_2}$ in $\Omega$,
\item[{\rm (vii)}]
if a sequence $\{\alpha_n\}$ is such that $\alpha_n>0$ for all $n\in\mathbb N$ with $\alpha_n\to \infty$ as $n\to\infty$, then $u_{\alpha_n}\to u_{\infty}$
in $V$ as $n\to\infty$.
\end{itemize}
\end{Theorem}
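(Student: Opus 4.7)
The seven assertions will be proved in the listed order, since (iii)--(vi) rely on well-posedness from (i)--(ii), and (vii) leans on the comparison estimates. Four ingredients drive the argument: the surjectivity theorem for pseudomonotone coercive operators (Theorem~\ref{surjecttheorem}), strict monotonicity of $A$ (Proposition~\ref{proA}), truncation test functions of the form $(u-b)_+$ or $(u_\alpha-u_\infty)_+$ for the comparisons, and the $(S_+)$-property of $A$ to upgrade weak to strong convergence in (vii).

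For (ii), show that the operator $T_\alpha := A + B + \alpha L \colon V\to V^*$ is pseudomonotone and coercive, then apply Theorem~\ref{surjecttheorem}. For pseudomonotonicity, $A$ is of type $(S_+)$ by Proposition~\ref{proA}, while $B$ and $L$ are bounded, continuous and monotone --- $s\mapsto |s|^{\theta-2}s$ is monotone (and bounded into $V^*$ by Sobolev embedding since $\theta<p^*$), and $H(l)$(i)--(ii) together with compactness of the trace operator supply the corresponding properties of $L$ --- hence pseudomonotone by Theorem~\ref{pseudomonotheorem}. Coercivity comes from
\[
\langle T_\alpha u, u\rangle \ge \|u\|_V^p + \beta\|u\|_{L^\theta(\Omega)}^\theta + \alpha\!\int_{\Gamma_3}\!\big(l(x,u)-l(x,b)\big)(u-b)\,d\Gamma,
\]
the last integral being nonnegative by $H(l)$(ii)--(iii), which dominates $\langle f,\cdot\rangle$. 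Uniqueness follows from strict monotonicity of $A$. For (i), remove the Dirichlet datum on $\Gamma_3$ by fixing a lift $\varphi\in V$ with $\varphi = b$ on $\Gamma_3$ and seeking $u_\infty = \varphi + w$ with $w\in K_0$; the shifted operator $w\mapsto A(\varphi+w)+B(\varphi+w)$ is pseudomonotone and coercive on the reflexive Banach space $K_0$, so Theorem~\ref{surjecttheorem} applies again.

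For (iii) and (iv), test with $v=(u-b)_+$, which lies in $K_0$ for Problem~\ref{problemss1} (as $u_\infty=b$ on $\Gamma_3$) and in $V$ for Problem~\ref{problemss2}: on $\{u>b\}$ the $A$- and $B$-contributions are nonnegative, the boundary term $\alpha\int_{\Gamma_3}l(x,u)(u-b)_+$ is nonnegative by $H(l)$, and the right-hand side $\int_\Omega g(u-b)_+ - \int_{\Gamma_2}r(u-b)_+$ is nonpositive by $H(0)$, which forces $(u-b)_+\equiv 0$. For (v), test with $v=(u_\alpha-u_\infty)_+$, which lies in $K_0$ since $u_\alpha\le b=u_\infty$ on $\Gamma_3$ by (iv); subtracting (\ref{eqnss3.12}) from (\ref{eqnss3.13}), the $L$-term vanishes on $\Gamma_3$, monotonicity of $A$ and $B$ forces each term to vanish, and strict monotonicity of $A$ together with the Poincar\'e inequality yields $(u_\alpha-u_\infty)_+\equiv 0$. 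For (vi), test the difference of (\ref{eqnss3.12}) at $\alpha_1$ and $\alpha_2$ with $v=(u_{\alpha_1}-u_{\alpha_2})_+\in V$: the left-hand side is nonnegative by monotonicity, while the right-hand side equals $(\alpha_2-\alpha_1)\int_{\Gamma_3}l(x,u_{\alpha_2})(u_{\alpha_1}-u_{\alpha_2})_+\le 0$, since $u_{\alpha_2}\le b$ implies $l(\cdot, u_{\alpha_2})\le 0$ by $H(l)$; both sides vanish and the argument closes as in (v).

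For (vii), first obtain a uniform $V$-bound on $\{u_{\alpha_n}\}$ by testing (\ref{eqnss3.12}) with $u_{\alpha_n}-\varphi$, where $\varphi$ is the lift from (i): the penalty term $\alpha_n\int_{\Gamma_3}l(x,u_{\alpha_n})(u_{\alpha_n}-b)$ is nonnegative by $H(l)$ and can be discarded, leaving a standard coercivity estimate. Extract a subsequence with $u_{\alpha_n}$ weakly convergent to some $u^*$ in $V$, strongly convergent in $L^p(\Omega)$ and $L^p(\Gamma)$ by compact embedding and trace. To identify $u^*\in K$, note that for every $v\in V$,
\[
\alpha_n\!\int_{\Gamma_3}\!l(x,u_{\alpha_n})v\,d\Gamma = \langle f,v\rangle-\langle A u_{\alpha_n}+Bu_{\alpha_n},v\rangle
\]
has uniformly bounded right-hand side, so $\int_{\Gamma_3}l(x,u_{\alpha_n})v\to 0$; continuity of the Nemytskii operator associated with $l$ on $L^p(\Gamma_3)$ then yields $\int_{\Gamma_3}l(x,u^*)v=0$ for all $v\in V$, hence $l(\cdot,u^*)\equiv 0$ and $u^*=b$ on $\Gamma_3$ by $H(l)$(iii). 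Next, test with $v=u_{\alpha_n}-u^*$: the $L$-term is $\alpha_n\int_{\Gamma_3}l(x,u_{\alpha_n})(u_{\alpha_n}-b)\ge 0$, so $\limsup_n\langle Au_{\alpha_n},u_{\alpha_n}-u^*\rangle\le 0$ (the $B$- and $f$-contributions vanish by strong $L^p$-convergence), and the $(S_+)$-property gives $u_{\alpha_n}\to u^*$ strongly in $V$. Passing to the limit in (\ref{eqnss3.12}) against $v\in K_0$ (where the $L$-term vanishes) shows that $u^*$ solves (\ref{eqnss3.13}); by the uniqueness in (i), $u^*=u_\infty$, and since every subsequential limit coincides with $u_\infty$, the whole sequence converges. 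The main technical obstacle lies precisely here: choosing the right lift to secure the uniform $V$-bound, and turning boundedness of $\{\alpha_n\langle Lu_{\alpha_n},\cdot\rangle\}$ into the pointwise trace condition $u^*=b$ on $\Gamma_3$ before the $(S_+)$-property can be invoked.
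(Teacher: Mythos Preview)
Your plan tracks the paper's proof closely and the overall architecture is correct, but two steps as written are not quite right.

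In (ii), your coercivity inequality
\[
\langle T_\alpha u, u\rangle \ge \|u\|_V^p + \beta\|u\|_{L^\theta(\Omega)}^\theta + \alpha\!\int_{\Gamma_3}\!(l(x,u)-l(x,b))(u-b)\,d\Gamma
\]
is missing a term: since $l(x,b)=0$, one has $l(x,u)u=(l(x,u)-l(x,b))(u-b)+b\,l(x,u)$, and the extra piece $\alpha b\int_{\Gamma_3}l(x,u)\,d\Gamma$ can be negative (e.g.\ whenever $u<b$). You must control it via the growth bound $H(l)$(i), the trace inequality, and Young's inequality --- exactly what the paper does --- so that it is absorbed by $\tfrac12\|u\|_V^p$. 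The fix is routine, but the displayed inequality is false as stated.

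In (vi), the right-hand side after subtraction is $\int_{\Gamma_3}(\alpha_2 l(x,u_{\alpha_2})-\alpha_1 l(x,u_{\alpha_1}))(u_{\alpha_1}-u_{\alpha_2})_+\,d\Gamma$, which does \emph{not} equal $(\alpha_2-\alpha_1)\int_{\Gamma_3}l(x,u_{\alpha_2})(u_{\alpha_1}-u_{\alpha_2})_+\,d\Gamma$; you are dropping the piece $\alpha_1\int_{\Gamma_3}(l(x,u_{\alpha_2})-l(x,u_{\alpha_1}))(u_{\alpha_1}-u_{\alpha_2})_+\,d\Gamma$. That dropped piece is indeed $\le 0$ by $H(l)$(ii), so your conclusion survives once you replace ``equals'' by ``is bounded above by''. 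The paper reaches the same endpoint through a slightly different factorisation, writing $\alpha_2\big(l(x,u_{\alpha_2})-\tfrac{\alpha_1}{\alpha_2}l(x,u_{\alpha_1})\big)$.

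Your route to $u^*\in K$ in (vii) --- using boundedness of $\alpha_n\langle Lu_{\alpha_n},v\rangle$ for every fixed $v$ together with continuity of the Nemytskii map $L^p(\Gamma_3)\to L^{p'}(\Gamma_3)$ --- is a legitimate alternative to the paper's argument, which instead tests with $u_n-u_\infty$ and uses $\int_{\Gamma_3}l(x,u^*)(u^*-b)\,d\Gamma=0$ combined with $H(l)$(ii)--(iii). Both work; yours is slightly more direct.
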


\begin{proof}
(i) It is a direct consequence
of~\cite[Lemma~6]{ZengMigorskiTarzia2021AA}.

\smallskip

(ii) From (\ref{eqnss3.12}), we can observe that $u\in V$ is a weak solution to Problem~\ref{problemss2}
if and only if it solves the following abstract operator equation: find $u\in V$ such that
\begin{equation}\label{incproblem}
Au+Bu+\alpha Lu= f \ \ \mbox{in} \ \ V^*.
\end{equation}
By Proposition~\ref{proA}, we know that $A$ is a
bounded, continuous, strictly monotone
(hence maximal monotone) operator,
and of type $(S_+)$. Also, we can obtain
\begin{equation}\label{eqnss3.3}
\|Au\|_{V^*}\le \|u\|_V^{p-1} +
\mu\, \|\nabla u\|_{L^{(q-1)p'}(\Omega;\real^N)}^{q-1}
\ \ \mbox{for all} \ \ u\in V.
\end{equation}
Employing~\cite[Theorem~3.69]{smo1} we deduce
that $A$ is a pseudomonotone operator.
As concerns operator $B$, it is monotone and continuous, and satisfies
\begin{equation}\label{eqnss3.4}
\|Bu\|_{V^*}\le c_1\, \|u\|_V^{\theta-1}
\ \ \mbox{for all} \ \ u\in V
\end{equation}
with some $c_1>0$.
The latter combined with the compactness of the embedding of $V$ to $L^\theta(\Omega)$
(due to $\theta<p^*$)
implies that $B$ is completely continuous, so,
it is also pseudomonotone. For any $u\in V$, from hypotheses $H(l)$ and the H\"older inequality,
we have
\begin{eqnarray*}
&&\hspace{-0.4cm}
\|Lu\|_{L^{p'}(\Omega)}=\sup_{v\in L^p(\Gamma_3),\|v\|_{L^p(\Gamma_3)}=1}\langle Lu,v\rangle_{L^{p'}(\Gamma_3)\times L^p(\Gamma_3)}
\\
&&
\le \sup_{v\in L^p(\Gamma_3),\|v\|_{L^p(\Gamma_3)}=1}
\int_{\Gamma_3}|l(x,u(x))v(x)|\,d\Gamma
\\
&&
\le \sup_{v\in L^p(\Gamma_3),\|v\|_{L^p(\Gamma_3)}=1}\int_{\Gamma_3}(a_l(x)+b_l(1+|u(x)|^{p-1}))|v(x)|\,d\Gamma
\\
&&
\le \sup_{v\in L^p(\Gamma_3),\|v\|_{L^p(\Gamma_3)}=1}
\left(\|a_l\|_{L^{p'}(\Gamma_3)}
+ b_l|\Gamma_3|^\frac{1}{p'} + b_1\|u\|_{L^p(\Gamma_3)}^{p-1}
\right)\, \|v\|_{L^p(\Gamma_3)}
\\
&& \le \|a_l\|_{L^{p'}(\Gamma_3)}
+ b_l \, |\Gamma_3|^\frac{1}{p'} + b_1 \, \|u\|_{L^p(\Gamma_3)}^{p-1}.
\end{eqnarray*}
Hence,
$L\colon V\to V^*$ is well-defined.
From the compactness of the trace operator
$\gamma\colon V\to L^p(\Gamma_3)$ and the definition of $L$, we can also see that $L$ is continuous.
Besides, we use condition $H(l)$(ii) to infer that
$L$ is monotone, that is,
\begin{equation*}
\langle Lu-Lv,u-v\rangle
=\int_{\Gamma_3}(l(x,u(x))-l(x,v(x)))(u(x)-v(x))
\,d\Gamma\ge 0
\end{equation*}
for all $u$, $v\in V$.
This together with~\cite[Theorem~3.69]{smo1} implies  that $L$ is a pseudomonotone operator. Therefore,
by using Theorem~\ref{pseudomonotheorem}(ii),
we infer that $A+B+\alpha L\colon V\to V^*$ is pseudomonotone.

Next, let $\varepsilon>0$ be arbitrary.
From hypothesis $H(l)$, we get the estimate
\begin{eqnarray*}
&&\hspace{-0.8cm}
\langle Lu,u\rangle
= \int_{\Gamma_3}l(x,u(x))u(x)\,d\Gamma
=\int_{\Gamma_3}l(x,u(x))(u(x)-b)\,d\Gamma
+\int_{\Gamma_3}l(x,u(x))b\,d\Gamma
\\
&&\ge \int_{\Gamma_3}l(x,b)(u(x)-b)\,d\Gamma
+\int_{\Gamma_3}l(x,u(x))b\,d\Gamma
\\
&&\ge -\int_{\Gamma_3}(a_l(x)+b_l(1+|u(x)|^{p-1}))b\,d\Gamma
\\
&& \ge -b\, |\Gamma_3|^\frac{1}{p}\|a_l\|_{L^{p'}(\Gamma_3)}
-b_l\, b\, |\Gamma_3|- b_l\, b\, \int_{\Gamma_3}|u(x)|^{p-1}\,d\Gamma
\\
&&\ge
-b\,|\Gamma_3|^\frac{1}{p}\|a_l\|_{L^{p'}(\Gamma_3)} -b_l\, b\, |\Gamma_3| -\varepsilon\, \|u\|_{L^{p}(\Gamma_3)}^p - c(\varepsilon)
\end{eqnarray*}
with some $c(\varepsilon)>0$,
where the last inequality is obtained by using
the Young inequality. From the estimates above
and definitions of $A$ and $B$, we obtain
\begin{eqnarray}\label{eqns3.9}
&&\hspace{-0.8cm}
\langle Au+Bu+\alpha Lu, u\rangle
\\
&&\ge \|u\|_{V}^p
+\mu\|\nabla u\|_{L^q(\Omega;\real^N)}^q
+\beta\|u\|_{L^\theta(\Omega)}^\theta
-\alpha \, b\, |\Gamma_3|^\frac{1}{p}
\|a_l\|_{L^{p'}(\Gamma_3)}
-\alpha \, b_l\, b\, |\Gamma_3| \nonumber
\\
&&
\qquad -\varepsilon\,\alpha\,\|u\|_{L^{p}(\Gamma_3)}^p
-\alpha \, c(\varepsilon)\nonumber
\\
&&\ge \|u\|_{V}^p
+\mu\|\nabla u\|_{L^q(\Omega;\real^N)}^q
+\beta\|u\|_{L^\theta(\Omega)}^\theta
-\alpha\, b\, |\Gamma_3|^\frac{1}{p}
\|a_l\|_{L^{p'}(\Gamma_3)}
-\alpha \, b_l\, b\, |\Gamma_3| \nonumber
\\
&&
\qquad -\varepsilon\, \alpha \, c_V^p \, \|u\|_V^p
-\alpha \, c(\varepsilon)\nonumber
\end{eqnarray}
for all $u\in V$.
We set $\varepsilon=\frac{1}{2 \alpha c_V^p}$ ,
where $c_V > 0$ is the constant for the embedding
of $V$ into $L^p(\Gamma_3)$, that is,
$\| v \|_{L^p(\Gamma_3)} \le c_V \| v \|_V$ for $v \in V$.
Then, because of $p>1$,
we conclude that $A+B+\alpha L$ is coercive.
Therefore, all conditions of Theorem~\ref{surjecttheorem} are verified.
Using this theorem, we deduce that Problem~$\ref{problemss2}$ has at least one solution.  Furthermore,
the strict monotonicity of $A$ allows us to apply
a standard method to show that Problem~$\ref{problemss2}$
has a unique solution $u_\alpha \in V$.

\smallskip

(iii) Let $u_\infty\in K$ be the unique solution of Problem~\ref{problemss1}. We set $w=(u_\infty-b)^+$. Then, one has $u_\infty=b$ on $\Gamma_3$ and $w=0$ on $\Gamma_3$. Thus, $w\in K_0$.
We take $v= w$ in (\ref{eqnss3.13}) to get
\begin{eqnarray*}
&&\hspace{-0.9cm}
\int_\Omega\left(|\nabla u_\infty|^{p-2}\nabla u_\infty+\mu|\nabla u_\infty|^{q-2}\nabla u_\infty,\nabla (u_\infty-b)^+\right)_{\real^N}\,dx
\\
&&+\int_\Omega\beta
|u_\infty(x)|^{\theta-2}u_\infty(x)(u_\infty(x)-b)^+\,dx
=\langle A u_\infty + B u_\infty, w\rangle
= \langle f, w\rangle
\\
&&
=\int_\Omega g(x)(u_\infty(x)-b)^+\,dx
-\int_{\Gamma_2}r(x)(u_\infty(x)-b)^+\,d\Gamma.
\end{eqnarray*}
From condition $H(0)$ we deduce
\begin{equation*}
\int_\Omega g(x)(u_\infty(x)-b)^+\,dx
-\int_{\Gamma_2}r(x)(u_\infty(x)-b)^+\,d\Gamma\le 0,
\end{equation*}
while the monotonicity of $B$ and
nonnegativity of $b$ guarantee that
\begin{equation*}
\int_\Omega\beta|u_\infty(x)|^{\theta-2}u_\infty(x)(u_\infty(x)-b)^+\,dx \ge 0.
\end{equation*}
Taking into account the last two  inequalities
and the fact $\nabla b=0$, we have
\begin{equation*}
\langle A u_\infty-Ab,w\rangle \le 0.
\end{equation*}
The latter combined with the strict monotonicity of $A$ implies that $w=0$. This means that $u_\infty\le b$ in $\Omega$.

\smallskip

(iv) Let $u_\alpha\in V$ be the unique solution of Problem~\ref{problemss2} corresponding to $\alpha>0$.
We put $w=(u_\alpha-b)^+$. Inserting $v = w$ into (\ref{eqnss3.12}), it yields
\begin{equation}\label{intee}
\langle Au_\alpha+Bu_\alpha,w\rangle
+\alpha\langle Lu_\alpha,w\rangle
= \langle f, w \rangle.
\end{equation}
Hence, we have
\begin{eqnarray*}
&&\hspace{-0.6cm}
\langle Au_\alpha,w\rangle\le -\alpha\langle Lu_\alpha,w\rangle=
-\alpha\int_{\Gamma_3}
l(x,u_\alpha(x))(u_\alpha(x)-b)^+\,d\Gamma
\\
&&=
-\alpha\int_{\{u_\alpha>b\}\cap\Gamma_3}
l(x,u_\alpha(x))(u_\alpha(x)-b)\,d\Gamma
\le -\alpha\int_{\{u_\alpha>b\}\cap\Gamma_3}
l(x,b)(u_\alpha(x)-b)\,d\Gamma\\
&& =0,
\end{eqnarray*}
where we have used the monotonicity of
the function $s\mapsto l(x,s)$ and hypothesis $H(l)$(iii), and the set $\{u_\alpha>b\}$ is defined by
$\{u_\alpha>b\}:=\{x\in \Gamma_3 \mid u_\alpha(x)>b\}$.
Therefore, one has
\begin{equation*}
\langle Au_\alpha-Ab,w\rangle\le 0.
\end{equation*}
Then, it is true that $w=0$, i.e.,
$u_\alpha \le b$ in $\Omega$.

From equation (\ref{intee}) and the fact
$u_\alpha\le b$ in $\Omega$, we have
\begin{eqnarray}\label{ineess}
&&
0\ge \alpha\, \langle Lu_\alpha,w\rangle=\alpha \int_{\Gamma_3}l(x,u_\alpha(x))
(u_\alpha(x)-b)^+\,d\Gamma
\\
&& \qquad
= \, \alpha \int_{\{u_\alpha>b\}\cap\Gamma_3}
l(x,u_\alpha(x))(u_\alpha(x)-b)\,d\Gamma.
\nonumber
\end{eqnarray}
Using the monotonicity of $s\mapsto l(x,s)$ and hypothesis $H(l)$(iii) again, we obtain
$l(x,u_\alpha(x))$ $(u_\alpha(x)-b)\ge
l(x,b)(u_\alpha(x)-b)=0$
for a.e. $x\in \{u_\alpha>b\} \cap \Gamma_3$.
This together with inequality (\ref{ineess})
implies $l(x,u_\alpha(x))=0$ due to $u_\alpha(x)>b$.
On the other hand, condition $H(l)$(iii) turns out  $u_\alpha(x)=b$. This leads to a contradiction.
Therefore, we conclude that $u_\alpha\le b$ on $\Gamma_3$ as claimed.

\smallskip

(v) For any $\alpha>0$ fixed, let $u_\alpha\in V$ and $u_\infty\in K$ be the unique solutions of Problem~\ref{problemss2} and Problem~\ref{problemss1}, respectively.
We set $w=(u_\alpha-u_\infty)^+$.
From assertion (iv) it follows that
$u_\alpha\le b$ on $\Gamma_3$.
We use the definition of $K$
(i.e., $u_\infty=b$ on $\Gamma_3$) to get $w=(u_\alpha-u_\infty)^+=0$ on $\Gamma_3$,
so, $w\in K_0$.
Taking $v=w$ into (\ref{eqnss3.12}) and (\ref{eqnss3.13}), respectively, we have
\begin{equation*}
\langle A u_\infty + B u_\infty, w\rangle
= \langle f, w\rangle
\ \ \ \mbox{and} \ \ \
\langle Au_\alpha+Bu_\alpha,w\rangle
+\alpha\langle \,Lu_\alpha,w\rangle= \langle f, w\rangle.
\end{equation*}
Summing up the equalities above, it gives
\begin{eqnarray*}
&&\hspace{-0.6cm}
\langle Au_\alpha- A u_\infty+Bu_\alpha-B u_\infty,w\rangle
=-\alpha\, \langle Lu_\alpha,w\rangle
\\
&&=-\alpha \int_{\Gamma_3}l(x,u_\alpha(x))(u_\alpha(x)-u_\infty(x))^+\,d\Gamma
= -\alpha \int_{\Gamma_3}l(x,u_\alpha(x))(u_\alpha(x)-b)^+\,d\Gamma\\
&&=0.
\end{eqnarray*}
Employing the monotonicity of $A$ and $B$, we have $w=0$, which implies $u_\alpha\le u_\infty$ in $\Omega$.

\smallskip

(vi) Let $0<\alpha_1\le \alpha_2$ and $u_i:=u_{\alpha_i}$ be the unique solution of Problem~\ref{problemss2} associated with $\alpha=\alpha_i$ for $i=1$, $2$.
Hence
\begin{equation*}
\langle Au_i+Bu_i,v\rangle
+\alpha_i\langle Lu_i,v\rangle= \langle f,v\rangle
\ \ \mbox{for all} \ \ v \in V.
\end{equation*}
Let $w:=(u_1-u_2)^+$.
Putting $v=w$ into the equality above and summing up the resulting equations, one has
\begin{eqnarray}\label{eeqss1}
&&\hspace{-0.5cm}
\langle Au_1-Au_2+Bu_1-Bu_2,w\rangle
=\langle \alpha_2Lu_2-\alpha_1Lu_1,w\rangle=
\alpha_2\, \big\langle Lu_2-\frac{\alpha_1}{\alpha_2}Lu_1,w\big\rangle
\nonumber\\
&&
=\alpha_2\int_{\Gamma_3}\left(l(x,u_2(x))-\frac{\alpha_1}{\alpha_2}l(x,u_1(x))\right)(u_1(x)-u_2(x))^+\,d\Gamma\nonumber\\
&&
=\alpha_2\int_{\{u_1>u_2\}\cap\Gamma_3}\left(l(x,u_2(x))-\frac{\alpha_1}{\alpha_2}l(x,u_1(x))\right)(u_1(x)-u_2(x))\,d\Gamma.
\end{eqnarray}
Recalling that $u_i\le b$ on $\Gamma_3$
(see assertion (iv)),
we use condition $H(l)$(ii) to find that
$0=l(x,b)\ge l(x,u_i(x))$ for a.e. $x\in \Gamma_3$ and $i=1,2$.
From hypothesis $H(l)$(ii) again and the fact $\frac{\alpha_1}{\alpha_2}\le 1$, we get
\begin{eqnarray*}
&&\hspace{-0.9cm}
-\frac{\alpha_1}{\alpha_2}\, l(x,u_1(x))(u_1(x)-u_2(x))
\le -\frac{\alpha_1}{\alpha_2} \,
l(x,u_2(x))(u_1(x)-u_2(x))
\\[1mm]
&&
\le - l(x,u_2(x))(u_1(x)-u_2(x))\mbox{ for a.e. $x\in \{u_1>u_2\}$}.
\end{eqnarray*}
Inserting the inequality above into (\ref{eeqss1}) and using hypothesis $H(l)$(ii), it yields
\begin{eqnarray*}
&&\langle Au_1-Au_2+Bu_1-Bu_2,w\rangle\\
&&\le \alpha_2\int_{\{u_1>u_2\}\cap \Gamma_3}\left(l(x,u_2(x))- l(x,u_1(x))\right)(u_1(x)-u_2(x))\,d\Gamma \le 0.
\end{eqnarray*}
Therefore, we conclude that $w=0$, and finally
$u_1\le u_2$ in $\Omega$.

\smallskip

(vii) Let $\{\alpha_n\}$ be a sequence such that $\alpha_n>0$ for all $n\in\mathbb N$ and $\alpha_n\to \infty$ as $n\to\infty$.
For each $n\in\mathbb N$, let
$u_n:=u_{\alpha_n}$ be the unique solution of Problem~\ref{problemss2} corresponding to
$\alpha= \alpha_n$.
We claim that sequence $\{u_n\}$ is bounded in $V$.
For each $n\in \mathbb N$, we have
\begin{equation*}
\|f\|_{V^*}(\|u_n\|_V+\|u_\infty\|_V)\ge \langle f,u_n-u_\infty\rangle=\langle Au_n+Bu_n+\alpha_nLu_n,u_n-u_\infty\rangle.
\end{equation*}
Applying conditions $H(l)$(ii) and (iii), one finds
\begin{eqnarray*}
&&\hspace{-0.9cm}
\alpha_n\langle Lu_n,u_n-u_\infty\rangle
=\alpha_n\int_{\Gamma_3}l(x,u_n(x))(u_n(x)-u_\infty(x))\,d\Gamma
\\
&&
=\alpha_n\int_{\Gamma_3}l(x,u_n(x))(u_n(x)-b)\,d\Gamma
\ge \alpha_n\int_{\Gamma_3}l(x,b)(u_n(x)-b)\,d\Gamma =0.
\end{eqnarray*}
Then, we have
\begin{equation*}
\|f\|_{V^*}(\|u_n\|_V+\|u_\infty\|_V)+\langle Au_n+Bu_n,u_\infty\rangle\ge \langle Au_n+Bu_n,u_n\rangle.
\end{equation*}
We use the H\"older inequality and the monotonicity of $B$ to get
\begin{eqnarray*}
&&\hspace{-0.6cm}
\|u_n\|_V^p+\mu\|\nabla u_n\|_{L^q(\Omega;\real^N)}^q
\le \|u_n\|_V^{p-1}\|u_\infty\|_V+\beta\|u_\infty\|_{L^\theta(\Omega)}^\theta
\\[1mm]
&&
+\, \mu\,
\|\nabla u_n\|_{L^{p'(q-1)}(\Omega;\real^N)}^{q-1}
\|u_\infty\|_V+ \|f\|_{V^*}\left(\|u_n\|_V+\|u_\infty\|_V\right)
+c_2\|u_\infty\|_{L^{(\theta-1)p'}(\Omega)}^{\theta-1}\|u_n\|_V
\end{eqnarray*}
for some $c_2>0$. This reveals that sequence $\{u_n\}$ is bounded in $V$. Passing to the sub\-sequence if necessary, we may assume that
$u_n\wto u$ in $V$ as $n\to\infty$
with some $u\in V$.
We are going to show that $u\in K$, i.e., $u=b$ on $\Gamma_3$. The boundedness of operators $A$, $B$
and of sequence $\{u_n\}$ guarantee that there
exists a constant $c_3>0$ independent of $n$
such that
\begin{equation*}
\langle Au_n+Bu_n-f,u_\infty-u_n\rangle\le c_3
\end{equation*}
for all $n\in\mathbb N$. Then, for each $n\in\mathbb N$, we have
\begin{equation*}
\langle Lu_n,u_n-u_\infty\rangle
= \frac{1}{\alpha_n}\langle Au_n+Bu_n-f,u_\infty-u_n\rangle
\le \frac{c_3}{\alpha_n}.
\end{equation*}
Keeping in mind that the embedding of $V$ into $L^p(\Gamma_3)$ is compact, we have $u_n\to u$ in $L^p(\Gamma_3)$. By the Lebesgue dominated convergence theorem, we get
\begin{eqnarray*}
&&\hspace{-0.6cm}
0 =\lim_{n\to\infty} \frac{c_3}{\alpha_n}\ge \lim_{n\to\infty}\langle Lu_n,u_n-u_\infty\rangle=\lim_{n\to\infty}\int_{\Gamma_3}l(x,u_n(x))(u_n(x)-u_\infty(x))\,d\Gamma
\\
&&
=\int_{\Gamma_3}l(x,u(x))(u(x)-u_\infty(x))\,d\Gamma=\int_{\Gamma_3}l(x,u(x))(u(x)-b)\,d\Gamma
\\
&& \ge \int_{\Gamma_3}l(x,b)(u(x)-b)\,d\Gamma=0.
\end{eqnarray*}
So, it holds $l(x,u(x))(u(x)-b)=0$ for a.e.
$x\in \Gamma_3$. Condition $H(l)$(iii) points out
that $u(x)=b$ for a.e. $x\in \Omega$.
This means that $u\in K$.

Subsequently, we shall show that $u=u_\infty$. For any $w\in K$, we have
\begin{equation*}
\langle Au_n+Bu_n,u_n-w\rangle+\alpha_n\langle Lu_n,u_n-w\rangle=\langle f,u_n-w\rangle .
\end{equation*}
Because of $w=b$ on $\Gamma_3$, the following inequality holds
\begin{eqnarray*}
&&\hspace{-0.6cm}
\langle Lu_n,u_n-w\rangle=\int_{\Gamma_3}l(x,u_n(x))(u_n(x)-w(x))\,d\Gamma
\\
&&
=\int_{\Gamma_3}l(x,u_n(x))(u_n(x)-b)\,d\Gamma
\ge \int_{\Gamma_3}l(x,b)(u_n(x)-b)\,d\Gamma =0.
\end{eqnarray*}
This implies
\begin{equation}\label{stineqe}
\langle Au_n+Bu_n,w-u_n\rangle
\ge \langle f,w-u_n\rangle.
\end{equation}
From the monotonicity of $A$ and $B$, we infer that
\begin{eqnarray*}
\langle Aw+Bw,w-u_n\rangle\ge \langle f,w-u_n\rangle.
\end{eqnarray*}
Passing to the limit as $n\to\infty$
in the inequality above, one gets
\begin{equation*}
\langle Aw+Bw,w-u\rangle\ge \langle f,w-u\rangle
\ \ \mbox{for all} \ \ w\in K.
\end{equation*}
Due to $u\in K$, for any $t\in (0,1)$ and $v\in K$, we have $w_t:=tv+(1-t)u\in K$.
Inserting $w=w_t$ into the inequality above,
it gives
\begin{equation*}
\langle Au+Bu,v-u\rangle=\lim_{t\to 0}\, \langle Aw_t+Bw_t,v-u\rangle\ge \langle f,v-u\rangle
\end{equation*}
for all $v\in K$, namely,
\begin{equation*}
\langle Au+Bu,v\rangle= \langle f,v\rangle
\end{equation*}
for all $v\in K_0$.
From assertion (i), we know that $u_\infty$ is the unique solution of Problem~\ref{problemss1}.
Therefore, we deduce that $u=u_\infty$. Since, every weakly convergent subsequence of $\{u_n\}$ converges weakly to the same limit $u_\infty$, it follows that  the whole sequence $\{u_n\}$ converges weakly to $u_\infty$.

Finally, it is easy to prove that $u_n$ converges strongly in $V$ to $u_\infty$.
Indeed, putting $w=u_\infty$ into (\ref{stineqe}),
passing to the lower limit as $n\to\infty$ in the resulting inequality, and taking into account the monotonicity of $B$, we obtain
\begin{equation*}
\limsup_{n\to\infty}\,
\langle Au_n,u_n-u_\infty\rangle
\le \limsup_{n\to\infty}\,
\langle f,u_n-u_\infty\rangle
+\limsup_{n\to\infty}\langle Bu_\infty,u_\infty-u_n\rangle=0.
\end{equation*}
This inequality combined with the $(S_+)$-property of  operator $A$ implies that $u_n\to u_\infty$ in $V$ as $n\to\infty$.
\end{proof}

\section{Optimal control and asymptotic analysis}\label{Section4}

In this section we investigate two optimal control problems driven by mixed boundary value problems,
Problem~\ref{problemss1} and Problem~\ref{problemss2}, respectively.
We prove existence of optimal controls
and establish a result on the asymptotic convergence
of optimal control-state pairs,
when the parameter $\alpha$ tends to infinity.

Let $H=L^{p'}(\Omega)$. Given a measured datum
$z_d\in L^p(\Omega)$ and two regularization parameters
$\lambda$, $\rho>0$,
we consider the following distributed optimal control problems governed by Problem~\ref{problemss1} and Problem~\ref{problemss2}, respectively.
\begin{Problem}\label{problemss3}
Find $g^*\in H$ such that
\begin{equation}\label{eqn4.1}
J(g^*)=\min_{g\in H}J(g),
\end{equation}
where the cost functional $J$ is defined by
\begin{equation}\label{eqn4.2}
J(g)=\frac{\lambda}{p}\,
\|u_g-z_d\|_{L^p(\Omega)}^p
+\frac{\rho}{p'}\, \|g\|_{L^{p'}(\Omega)}^{p'},
\end{equation}
and $u_g$ is the unique solution to Problem~$\ref{problemss1}$ corresponding
to $g\in L^{p'}(\Omega)$.
\end{Problem}
\noindent
and
\begin{Problem}\label{problemss4}
Given $\alpha>0$, find $g^*\in H$ such that
\begin{equation}\label{eqn4.3}
J_\alpha(g^*)=\min_{g\in H}J_\alpha(g),
\end{equation}
where the cost functional $J_\alpha$ is defined by
\begin{equation}\label{eqn4.4}
J_\alpha(g)=\frac{\lambda}{p}\,
\|u_{\alpha g}-z_d\|_{L^p(\Omega)}^p
+\frac{\rho}{p'}\, \|g\|_{L^{p'}(\Omega)}^{p'},
\end{equation}
and $u_{\alpha g}$ is the unique solution to Problem~$\ref{problemss2}$ corresponding to $g\in L^{p'}(\Omega)$ and $\alpha>0$.
\end{Problem}
A control-state pair $(g^*, u_{g^*})$ on which the infimum of (\ref{eqn4.1}) is attained is called
an optimal solution to Problem~\ref{problemss3}.
An analogous notion is applied
to Problem~\ref{problemss4}.

\smallskip

The first result of this section is on existence
of solutions to Problems~\ref{problemss3} and~\ref{problemss4}.
\begin{Theorem}\label{theorem2}
Assume that $H(l)$ and $r\in L^{p'}_+(\Gamma_2)$ hold. Then, we have
\begin{itemize}
\item[{\rm (i)}]
Problem~$\ref{problemss3}$ has at least one
optimal solution $(g^*,u^*_{g^*})\in H\times K$,
\item[{\rm (ii)}]
for each $\alpha>0$, Problem~$\ref{problemss4}$
has at least one optimal solution
$(g_\alpha^*,u^*_{\alpha g^*_\alpha})\in H\times V$.
\end{itemize}
\end{Theorem}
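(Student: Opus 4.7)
The plan is to apply the direct method of the calculus of variations to each problem separately; since the structure is essentially the same, I describe Problem~\ref{problemss4} in detail and note the minor adaptation needed for Problem~\ref{problemss3}. Fix $\alpha>0$ and let $\{g_n\}\subset H$ be a minimizing sequence, i.e., $J_\alpha(g_n)\to \inf_{g\in H}J_\alpha(g)$. Since $J_\alpha(g)\ge \tfrac{\rho}{p'}\|g\|_{L^{p'}(\Omega)}^{p'}$, the sequence $\{g_n\}$ is bounded in the reflexive space $L^{p'}(\Omega)$, so, up to a subsequence, $g_n\wto g^*$ in $H$ for some $g^*\in H$. Define $f_n\in V^*$ through (\ref{eqnss3.11}) with $g$ replaced by $g_n$; clearly $\{f_n\}$ is bounded in $V^*$.

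The next step is to establish that $u_n:=u_{\alpha g_n}$ is bounded in $V$. This follows from the coercivity estimate (\ref{eqns3.9}) obtained in the proof of Theorem~\ref{theorem1}(ii), because the right-hand side $\|f_n\|_{V^*}$ is uniformly bounded. Consequently, along a further subsequence, $u_n\wto \tilde u$ in $V$ with some $\tilde u\in V$, and by the compact embeddings $V\hookrightarrow L^p(\Omega)$, $V\hookrightarrow L^\theta(\Omega)$ and $V\hookrightarrow L^p(\Gamma_3)$, we also have strong convergence $u_n\to\tilde u$ in each of these spaces.

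The main technical step is to identify $\tilde u$ with $u_{\alpha g^*}$. Starting from
\[
\langle A u_n+B u_n+\alpha L u_n, v\rangle=\langle f_n, v\rangle \ \ \mbox{for all } v\in V,
\]
I would first test with $v=u_n-\tilde u$. The growth conditions on $B$ and $L$ combined with the strong convergence of $u_n$ in $L^\theta(\Omega)$ and $L^p(\Gamma_3)$ give $\langle B u_n+\alpha L u_n, u_n-\tilde u\rangle \to 0$, while $g_n\wto g^*$ in $L^{p'}(\Omega)$ together with $u_n\wto \tilde u$ in $V$ yields $\langle f_n, u_n-\tilde u\rangle\to 0$. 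Hence $\limsup_{n\to\infty}\langle A u_n, u_n-\tilde u\rangle\le 0$, and the $(S_+)$-property of $A$ provided by Proposition~\ref{proA} upgrades weak to strong convergence, $u_n\to \tilde u$ in $V$. Passing to the limit in the variational equation (using continuity of $A$, $B$, $L$) shows that $\tilde u$ solves Problem~\ref{problemss2} with datum $g^*$, and uniqueness from Theorem~\ref{theorem1}(ii) gives $\tilde u=u_{\alpha g^*}$.

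Optimality of $(g^*,u_{\alpha g^*})$ then follows from the lower semicontinuity of $J_\alpha$: strong convergence $u_n\to u_{\alpha g^*}$ in $L^p(\Omega)$ gives $\|u_n-z_d\|_{L^p(\Omega)}^p\to \|u_{\alpha g^*}-z_d\|_{L^p(\Omega)}^p$, and $g\mapsto \|g\|_{L^{p'}(\Omega)}^{p'}$ is convex and continuous, hence weakly lower semicontinuous, yielding $J_\alpha(g^*)\le \liminf_{n\to\infty} J_\alpha(g_n)=\inf_{g\in H}J_\alpha(g)$. For Problem~\ref{problemss3} the scheme is identical with $\alpha L\equiv 0$, with two small adjustments: the $V$-bound on $u_n\in K$ is obtained by writing $u_n=w_n+\hat u$ with a fixed $\hat u\in K$ and $w_n\in K_0$, and testing the equation against $w_n$ to absorb the trace; and the inclusion $\tilde u\in K$ is preserved in the limit by weak continuity of the trace from $V$ to $L^p(\Gamma_3)$. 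The main obstacle throughout is the verification of the $\limsup$ inequality that enables the use of the $(S_+)$-property, since only with strong convergence in $V$ can one pass to the limit in the nonlinear $(p,q)$-Laplacian term and identify $\tilde u$ uniquely.
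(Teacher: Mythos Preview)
Your proposal is correct and follows essentially the same approach as the paper: the direct method with a minimizing sequence, the coercivity estimate (\ref{eqns3.9}) for boundedness of $\{u_n\}$, the $(S_+)$-property of $A$ to upgrade weak to strong convergence in $V$, and weak lower semicontinuity of the cost. The only cosmetic difference is that the paper handles the lower-order terms via monotonicity of $B$ and $L$ (bounding $\langle Bu_n,u-u_n\rangle\le\langle Bu,u-u_n\rangle$, etc.)\ before invoking compactness, whereas you use the growth conditions plus strong convergence in $L^\theta(\Omega)$ and $L^p(\Gamma_3)$ directly; both routes give $\limsup_n\langle Au_n,u_n-\tilde u\rangle\le 0$. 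Your remark on Problem~\ref{problemss3}---shifting by a fixed $\hat u\in K$ so as to test with an element of $K_0$---is exactly the right adjustment; the paper simply declares that (i) ``can be obtained in a similar way'' without spelling this out.
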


\begin{proof}
We prove statement (ii), while assertion (i) can be obtained in a similar way.
For any $\alpha>0$ fixed, it follows from
definition (\ref{eqn4.4}) that
$J_\alpha$ is bounded from below. This permits us to find a minimizing sequence $\{g_n\}\subset H$ of Problem~\ref{problemss4} such that
\begin{equation}\label{eqn4.5}
\lim_{n\to\infty} J_\alpha(g_n)=\inf_{g\in H} J_\alpha(g):=m_\alpha\ge 0.
\end{equation}
By the coercivity of $J_\alpha$, we can see that sequence $\{g_n\}$ is bounded in $L^{p'}(\Omega)$.
By the reflexivity of $L^{p'}(\Omega)$,
we may assume, passing to a subsequence if necessary,
that
\begin{equation}\label{eqn4.6}
g_n\wto g \ \ \mbox{in} \ \ L^{p'}(\Omega)
\end{equation}
for some $g\in H$.
Let us denote by $u_n\in V$ the unique solution to Problem~\ref{problemss2} corresponding to $g=g_n$ and $\alpha>0$.
We claim that $\{u_n\}$ is bounded in $V$. Let $\varepsilon :=\frac{1}{2 \alpha c_V^p}$.
For every $n\in\mathbb N$, a simple computation
gives (see (\ref{eqns3.9}), for example)
\begin{eqnarray}\label{estimatebound}
&&\hspace{-1.0cm}
\|g_n\|_{L^{p'}(\Omega)}\|u_n\|_{L^p(\Omega)}+\|r\|_{L^{p'}(\Gamma_2)}\|u_n\|_{L^p(\Gamma_2)}
\\[1mm]
&& \ge \langle f_n,u_n\rangle=\langle Au_n+Bu_n+\alpha Lu_n, u_n\rangle\nonumber\\[1mm]
&& \ge  \|u_n\|_{V}^p
+\mu\|\nabla u_n\|_{L^q(\Omega;\real^N)}^q
+\beta\|u_n\|_{L^\theta(\Omega)}^\theta-\alpha
b|\Gamma_3|^\frac{1}{p}\|a_l\|_{L^{p'}(\Gamma_3)} -\alpha b_lb|\Gamma_3| \nonumber
\\[1mm]
&&\qquad
-\varepsilon\, \alpha \, c_V^p\, \|u_n\|_V^p
-\alpha \, c(\varepsilon), \nonumber
\end{eqnarray}
where $f_n\in V^*$ is defined by
\begin{equation*}
\langle f_n,v\rangle=\int_\Omega g_n(x)v(x)\,dx-\int_{\Gamma_2}r(x)v(x)\,d\Gamma
\ \ \mbox{for all}\ \ v\in V.
\end{equation*}
The latter combined with the continuity
of the embeddings of $V$ to $L^p(\Omega)$
and of $V$ to $L^{p}(\Gamma_2)$ implies that sequence  $\{u_n\}$ is bounded in $V$. Without any loss of generality, we may suppose that
\begin{equation}\label{eqn4.7}
u_n\wto u \ \ \mbox{in} \ \ V \ \mbox{as} \ n\to\infty
\end{equation}
with some $u\in V$.

Next, we verify that $u$ is the unique solution of Problem~\ref{problemss2} corresponding to
$g\in L^{p'}(\Omega)$ and $\alpha>0$.
In fact, for each $n\in\mathbb N$, one has
\begin{equation}\label{eqn4.8}
\langle Au_n+Bu_n+\alpha Lu_n,w\rangle=\langle f_n,w\rangle
\end{equation}
for all $w\in V$. We insert $w=u-u_n$ into (\ref{eqn4.8}) to get
\begin{equation*}
\langle Au_n,u_n-u\rangle=\langle Bu_n+\alpha Lu_n-f_n,u-u_n\rangle.
\end{equation*}
Passing to the upper limit as $n\to\infty$ in this equality and using the compactness of embeddings
of $V$ to $L^p(\Omega)$ and of $V$ to $L^{p}(\Gamma_2)$,
and the monotonicity of $B$ and $L$, we obtain
\begin{equation*}
\limsup_{n\to\infty}\, \langle Au_n,u_n-u\rangle
\le \limsup_{n\to\infty}\, \langle Bu+\alpha Lu-f_n,u-u_n\rangle=0.
\end{equation*}
Taking into account the above result and the fact that $A$ satisfies $(S_+)$-property, we find that
$u_n\to u$ in $V$ as $n\to\infty$.
Letting $n\to\infty$ in equation (\ref{eqn4.8}),
one has
\begin{equation*}
\langle Au+Bu+\alpha Lu,w\rangle=\langle f,w\rangle
\end{equation*}
for all $w\in V$. Now, it is obvious that $u$
is the unique solution of Problem~\ref{problemss2} corresponding to $g\in L^{p'}(\Omega)$ and $\alpha>0$.

Finally, from the weak lower semicontinuity of
the norm function $g\mapsto \|g\|_{L^{p'}(\Omega)}$,
we infer that
\begin{eqnarray*}
&&\hspace{-1.0cm}
\liminf_{n\to\infty}J_{\alpha}(g_n)
=
\liminf_{n\to\infty}
\left(\frac{\lambda}{p}\,
\|u_{n}-z_d\|_{L^p(\Omega)}^p
+\frac{\rho}{p'} \, \|g_n\|_{L^{p'}(\Omega)}^{p'}\right)
\\
&&
= \lim_{n\to\infty}\frac{\lambda}{p} \,
\|u_{n}-z_d\|_{L^p(\Omega)}^p
+\liminf_{n\to\infty}\, \frac{\rho}{p'}\,
\|g_n\|_{L^{p'}(\Omega)}^{p'}\\
&& \ge \frac{\lambda}{p}\, \|u-z_d\|_{L^p(\Omega)}^p
+\frac{\rho}{p'}\, \|g\|_{L^{p'}(\Omega)}^{p'}
= J_{\alpha}(g).
\end{eqnarray*}
This together with (\ref{eqn4.5}) entails that
$(g,u)\in H\times V$ is an optimal solution to  Problem~\ref{problemss4}.
This completes the proof.
\end{proof}

The second result of this section is on the asymptotic behavior of the optimal solutions to Problem~\ref{problemss4}.
\begin{Theorem}\label{theorem3}
Assume that $H(l)$ and $r\in L^{p'}_+(\Gamma_2)$ hold.
Let $\{\alpha_n\}$ be a sequence
such that $\alpha_n>0$ and $\alpha_n\to +\infty$ as $n\to\infty$, and  let $(g_{\alpha_n},u_{\alpha_ng_{\alpha_n}})$
be an optimal solution for Problem~\ref{problemss4}. Then,
there exist an optimal solution
$(g_\infty^*,u_{\infty g_\infty^*}^*)$ for Problem~\ref{problemss3} and a subsequence of $\{(g_{\alpha_n},u_{\alpha_ng_{\alpha_n}})\}$, still denoted by the same way, such that
\begin{equation*}
g_{\alpha_n}\to g_\infty^*
\ \ \mbox{\rm in} \ \ L^{p'}(\Omega) \ \ \mbox{\rm and} \ \  u_{\alpha_ng_{\alpha_n}}\to u_{\infty g_\infty^*}^*
\ \ \mbox{\rm in} \ \ V \ \mbox{\rm as} \ n\to\infty.
\end{equation*}
Moreover, the sequence $\{ J_{\alpha_n}(g_{\alpha_n}) \}$
of optimal values for Problem~\ref{problemss4}
converges to the optimal value $J(g_\infty^*)$
of Problem~\ref{problemss3}.

If Problem~\ref{problemss3} has a unique optimal solution, then the whole sequence
$\{(g_{\alpha_n},u_{\alpha_ng_{\alpha_n}})\}$
converges in $L^{p'}(\Omega) \times V$ to
$(g_\infty^*,u_{\infty g_\infty^*}^*)$
as $n \to \infty$.
\end{Theorem}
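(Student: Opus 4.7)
The plan is to adapt the asymptotic argument of Theorem~\ref{theorem1}(vii) to the case where the control varies together with $\alpha_n$, in the spirit of $\Gamma$-type convergence for the cost functionals. First I would establish boundedness of $\{g_{\alpha_n}\}$ in $L^{p'}(\Omega)$: the optimality inequality $J_{\alpha_n}(g_{\alpha_n}) \le J_{\alpha_n}(g_0)$ against any fixed test control $g_0\in H$ provides a uniform upper bound on the left-hand side, because Theorem~\ref{theorem1}(vii) gives $u_{\alpha_n g_0}\to u_{\infty g_0}$ in $V$, hence in $L^p(\Omega)$; the coercivity of the $\frac{\rho}{p'}\|\cdot\|_{L^{p'}(\Omega)}^{p'}$ penalty then yields the claim, and by reflexivity we may extract a subsequence with $g_{\alpha_n}\wto g_\infty^*$ in $L^{p'}(\Omega)$ for some $g_\infty^*\in H$.

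Next I would identify the limit of the associated states $u_n:=u_{\alpha_n g_{\alpha_n}}$. Their boundedness in $V$ follows from the coercivity estimate~(\ref{estimatebound}), uniformly in $n$, since $\{g_n\}$ is bounded in $L^{p'}(\Omega)$; along a further subsequence $u_n\wto u$ in $V$, and compactness of the embeddings $V\hookrightarrow L^p(\Omega)$ and $V\hookrightarrow L^p(\Gamma_3)$ gives strong convergence in those spaces. To check $u\in K$, I would divide the state equation by $\alpha_n$ and deduce $\langle Lu_n,u_n-\varphi\rangle \le C/\alpha_n$ for any fixed $\varphi\in K$, then pass to the limit via Lebesgue's dominated convergence theorem and invoke $H(l)$(iii) exactly as in Theorem~\ref{theorem1}(vii). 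To identify $u$ as the unique solution of Problem~\ref{problemss1} with datum $g_\infty^*$, I would test against $w-u_n$ for $w\in K$: the monotonicity of $s\mapsto l(x,s)$ and $l(x,b)=0$ make $\alpha_n\langle Lu_n,w-u_n\rangle$ non-positive, yielding the variational inequality $\langle Au_n+Bu_n,w-u_n\rangle \ge \langle f_{g_n},w-u_n\rangle$; passing to the limit (using strong convergence of $u_n$, continuity of $A$, complete continuity of $B$, and the weak--strong duality $\int_\Omega g_n(w-u_n)\,dx \to \int_\Omega g_\infty^*(w-u)\,dx$) gives the analogous inequality on $K$, and the standard Minty-type perturbation $w=u\pm t\eta$ with $\eta\in K_0$ converts it into the equation characterizing $u=u_{\infty g_\infty^*}^*$. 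Strong convergence $u_n\to u_{\infty g_\infty^*}^*$ in $V$ then follows from the $(S_+)$-property of $A$ applied to the bound $\limsup_n\langle Au_n,u_n-u\rangle\le 0$ obtained from the variational inequality at $w=u$.

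Finally, for optimality and convergence of values I would combine two one-sided estimates in $J_{\alpha_n}(g_{\alpha_n})\le J_{\alpha_n}(g)$: on the right, Theorem~\ref{theorem1}(vii) gives $J_{\alpha_n}(g)\to J(g)$ for every fixed $g\in H$; on the left, the strong convergence $u_n\to u_{\infty g_\infty^*}^*$ in $L^p(\Omega)$ and weak lower semicontinuity of $\|\cdot\|_{L^{p'}(\Omega)}^{p'}$ yield $J(g_\infty^*)\le \liminf_n J_{\alpha_n}(g_{\alpha_n})$, so that $J(g_\infty^*)\le J(g)$ for every $g\in H$ and $(g_\infty^*,u_{\infty g_\infty^*}^*)$ is optimal for Problem~\ref{problemss3}. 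Taking $g=g_\infty^*$ pins down $\lim_n J_{\alpha_n}(g_{\alpha_n})=J(g_\infty^*)$; subtracting the already-known convergence of the state term forces $\|g_{\alpha_n}\|_{L^{p'}(\Omega)}\to\|g_\infty^*\|_{L^{p'}(\Omega)}$, and the Radon--Riesz property of the uniformly convex space $L^{p'}(\Omega)$ upgrades weak convergence to strong convergence $g_{\alpha_n}\to g_\infty^*$. The uniqueness case follows by a standard Urysohn subsequence argument. The main obstacle is the double limit in the state equation---$\alpha_n\to\infty$ while $g_n$ converges only weakly---which is handled cleanly by the variational inequality on $K$, sidestepping any direct passage to the limit in the equation with the unbounded coefficient $\alpha_n$.
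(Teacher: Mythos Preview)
Your strategy is essentially the paper's: bound the controls by testing optimality against a fixed $g_0$, extract weak limits, identify the limit state via the variational inequality on $K$ and a Minty argument, upgrade via $(S_+)$, then close with weak lower semicontinuity and Radon--Riesz. Two steps, however, fail as written.

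The first is the boundedness of the states. The estimate~(\ref{estimatebound}) does \emph{not} give a bound on $\|u_n\|_V$ uniform in $n$: with $\varepsilon=\frac{1}{2\alpha_n c_V^p}$ the coefficient of $\|u_n\|_V^p$ is indeed $\tfrac12$, but the constant terms $-\alpha_n b|\Gamma_3|^{1/p}\|a_l\|_{L^{p'}(\Gamma_3)}$, $-\alpha_n b_l b|\Gamma_3|$ and $-\alpha_n c(\varepsilon)$ all blow up as $\alpha_n\to\infty$. The remedy is the one you already use one line later: test the state equation against $u_n-\varphi$ for a fixed $\varphi\in K$ (the paper takes $\varphi=u_\infty$). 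Then $H(l)$(ii),(iii) give $\alpha_n\langle Lu_n,u_n-\varphi\rangle\ge 0$, which eliminates the $\alpha_n$--dependence and yields a bound on $\|u_n\|_V$ depending only on $\sup_n\|g_n\|_{L^{p'}(\Omega)}$, $\|r\|_{L^{p'}(\Gamma_2)}$ and $\|\varphi\|_V$.

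The second is the passage to the limit in $\langle Au_n+Bu_n,w-u_n\rangle\ge\langle f_{g_n},w-u_n\rangle$. You invoke ``strong convergence of $u_n$'' and ``continuity of $A$'' before strong $V$--convergence has been established; with only $u_n\rightharpoonup u$ in $V$ there is no way to control $\langle Au_n,w-u_n\rangle$, so the argument is circular as ordered. Either reorder---first take $w=u$ to get $\limsup_n\langle Au_n,u_n-u\rangle\le 0$, apply $(S_+)$ to obtain $u_n\to u$ in $V$, and only then pass to the limit for general $w\in K$---or do as the paper does and use monotonicity directly: from $\langle Aw+Bw,w-u_n\rangle\ge\langle Au_n+Bu_n,w-u_n\rangle\ge\langle f_{g_n},w-u_n\rangle$ one can let $n\to\infty$ immediately, since $Aw$, $Bw$ are fixed and $f_{g_n}\to f_{g_\infty^*}$ against the strongly $L^p(\Omega)$--convergent sequence $w-u_n$.
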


\begin{proof}
Let $\{\alpha_n\}$ be a sequence such that
$\alpha_n>0$ and $\alpha_n\to +\infty$ as $n\to\infty$, and let $(g_n,u_n):=(g_{\alpha_n},u_{\alpha_ng_{\alpha_n}})$
be an optimal solution to Problem~\ref{problemss4} corresponding to $\alpha_n>0$.
It obvious that
\begin{equation*}
\frac{\lambda}{p}\|u_n-z_d\|_{L^p(\Omega)}^p+\frac{\rho}{p'}\|g_n\|_{L^{p'}(\Omega)}^{p'}=J_{\alpha_n}(g_n)\le J_{\alpha_n}(g)=\frac{\lambda}{p}\|\widetilde u_n-z_d\|_{L^p(\Omega)}^p+\frac{\rho}{p'}\|g\|_{L^{p'}(\Omega)}^{p'},
\end{equation*}
for any $g \in H$,
where $\widetilde u_n\in V$ is the unique solution of Problem~\ref{problemss2} corresponding to $\alpha_n>0$ and $g\in L^{p'}(\Omega)$.
Employing Theorem~\ref{theorem1}(vii), we have $\widetilde u_n\to u_\infty$ in $V$ as $n\to\infty$, where $u_\infty$ is the solution of Problem~\ref{problemss1} associated with $g\in L^{p'}(\Omega)$. We can observe that sequence
$\{g_n\}$ is bounded in $L^{p'}(\Omega)$.
Without any loss of generality, we may assume that
\begin{equation*}
g_n\wto g \ \ \mbox{in} \ \ L^{p'}(\Omega)
\ \mbox{as} \ n\to\infty
\end{equation*}
for some $g\in H$.
Next, for each $n\in \mathbb N$, we have
\begin{equation*}
\langle Au_n+Bu_n,u_{\infty}-u_n\rangle+\alpha_n\langle Lu_n,u_{\infty}-u_n\rangle=\langle f_n,u_{\infty}-u_n\rangle.
\end{equation*}
Using the same arguments as in the proof of (\ref{estimatebound}), we obtain that sequence  $\{u_n\}$ is bounded in $V$.
Passing to a subsequence if necessary,
we may suppose that
\begin{equation*}
u_n\wto u \ \ \mbox{in} \ \ V \ \mbox{as} \ n\to\infty
\end{equation*}
for some $u\in V$. Since $\{u_n\}$ is bounded in $V$,
by the former argument, one has
\begin{equation}\label{eqn4.9}
\langle Lu_n,u_n-u_{\infty}\rangle\le \frac{1}{\alpha_n}\left[\langle Au_n+Bu_n-f_n,u_{\infty}-u_n\rangle\right]\le \frac{c_4}{\alpha_n}
\end{equation}
for some $c_4>0$ which is independent of $n$.
We use the compactness of embedding of $V$ to $L^{p}(\Gamma_3)$, apply (\ref{eqn4.9})
and the Lebesgue dominated convergence theorem to get
\begin{equation*}
0=\lim_{n\to\infty}\frac{c_4}{\alpha_n}\ge \lim_{n\to\infty}\langle Lu_n,u_n-u_{\infty }\rangle
=\langle Lu,u-u_{\infty }\rangle\ge \langle Lu_{\infty },u-u_{\infty }\rangle=0.
\end{equation*}
Hence, it holds $u(x)=b$ for a.e. $x\in \Gamma_3$,
i.e., $u\in K$.
We take the limit as $n\to\infty$ in the following inequality
\begin{equation}\label{Spineq}
\langle Au_n+Bu_n,w-u_n\rangle\ge \langle f_n,w-u_n\rangle
\end{equation}
for all $w\in K$ to get
$\langle Au + Bu, v \rangle= \langle f, v \rangle$
for all $v\in K_0$. This means that $u\in K$ is the unique solution of Problem~\ref{problemss1} corresponding to $g$.
Choosing $w=u$ into (\ref{Spineq}) and passing to the upper limit as $n\to\infty$, we use the $(S_+)$-property of $A$ to get that $u_n\to u$ in $V$ as $n\to\infty$.
Note that from
$$
J_{\alpha_n}(g_n)\le J_{\alpha_n}(h)
\ \ \mbox{for all} \ \ h\in H,
$$
we have
\begin{eqnarray*}
&&\hspace{-0.5cm}
J(g)=\frac{\lambda}{p}\|u-z_d\|_{L^p(\Omega)}^p
+\frac{\rho}{p'}\|g\|_{L^{p'}(\Omega)}^{p'}
\\
&& \le \liminf_{n\to\infty}\Big( \frac{\lambda}{p}
\|u_n-z_d\|_{L^p(\Omega)}^p
+\frac{\rho}{p'}\|g_n\|_{L^{p'}(\Omega)}^{p'}\Big)
=\liminf_{n\to\infty} J_{\alpha_n}(g_n)
\le \liminf_{n\to\infty} J_{\alpha_n}(h)
\\
&& =\liminf_{n\to\infty}\Big(
\frac{\lambda}{p}\| \widehat {u}_n-z_d\|_{L^p(\Omega)}^p
+\frac{\rho}{p'}\|h\|_{L^{p'}(\Omega)}^{p'}\Big)
=
\frac{\lambda}{p}\| \widehat u-z_d\|_{L^p(\Omega)}^p
+\frac{\rho}{p'}\|h\|_{L^{p'}(\Omega)}^{p'}
= J(h)
\end{eqnarray*}
for all $h\in H$, where we have applied Theorem~\ref{theorem1}(vii) and $\widehat u_n$ is the unique solution of Problem~\ref{problemss2} corresponding to $h\in L^{p'}(\Omega)$ and $\alpha_n>0$. Therefore, we can see that $g$ is also a solution of Problem~\ref{problemss3}. In the meanwhile, we have $g=g_\infty^*$ and $u=u_{\infty g_\infty^*}^*$.

Finally, we show that $g_n$ converges strongly to $g_\infty^*$ in $L^{p'}(\Omega)$.
Keeping in mind that
\begin{equation*}
\|g_\infty^*\|_{L^{p'}(\Omega)}\le \liminf_{n\to\infty}\|g_n\|_{L^{p'}(\Omega)},
\end{equation*}
and
$J_{\alpha_n}(g_n)\le J_{\alpha_n}(g_\infty^*)$,
we have
\begin{equation}\label{eqn4.11}
J(g_\infty^*)\le \liminf_{n\to\infty}J_{\alpha_n}(g_n)
\le \limsup_{n\to\infty}J_{\alpha_n}(g_n)\le \limsup_{n\to\infty}J_{\alpha_n}(g_\infty^*)
=J(g_\infty^*).
\end{equation}
Hence
\begin{equation}\label{NEW1}
J(g_\infty^*)
= \lim_{n\to\infty}J_{\alpha_n}(g_n)
\ \ \mbox{and} \ \
\|g_\infty^*\|_{L^{p'}(\Omega)}= \lim_{n\to\infty}\|g_n\|_{L^{p'}(\Omega)}.
\end{equation}
Thus, we conclude that $g_n\to g_\infty^*$  in $L^{p'}(\Omega)$ by using the triangle inequality and the fact that $g_n\wto g$ in $L^{p'}(\Omega)$.
The convergence of the sequence
$\{ J_{\alpha_n}(g_{\alpha_n}) \}$
of optimal values for Problem~\ref{problemss4}
to the optimal value $J(g_\infty^*)$
of Problem~\ref{problemss3} is a consequence of (\ref{NEW1}).
This completes the proof.
\end{proof}

\begin{Remark}
{\rm
Let $y_d\in L^p(\Omega;\real^N)$ be a desired element. Theorems~\ref{theorem2} and~\ref{theorem3}
established in this section are valid when the cost functionals (\ref{eqn4.2}) and (\ref{eqn4.4}) are replaced by the ones
\begin{eqnarray*}
&&
J(g)=\frac{\lambda}{p}\,
\|\nabla u_g-y_d\|_{L^p(\Omega;\real^N)}^p
+\frac{\rho}{p'}\, \|g\|_{L^{p'}(\Omega)}^{p'}, \\[1mm]
&&
J_\alpha(g)=\frac{\lambda}{p}\,
\|\nabla u_{\alpha g}-y_d\|_{L^p(\Omega;\real^N)}^p
+\frac{\rho}{p'}\, \|g\|_{L^{p'}(\Omega)}^{p'},
\end{eqnarray*}
respectively.
}
\end{Remark}

\section*{Acknowledgements}
The authors wish to thank the two knowledgeable referees for their useful remarks in order to improve the paper.

This project has received funding from the NNSF of China Grant Nos. 12001478, 12026255 and 12026256,
the European Union's Horizon 2020 Research and Innovation Programme under the Marie Sk{\l}odowska-Curie grant agreement No. 823731 CONMECH,
National Science Center of Poland under Project No.
2021/41/B/ST1/01636,
and the Startup Project of Doctor Scientific Research of Yulin Normal University No. G2020ZK07.
It is also supported by Natural Science Foundation of Guangxi Grant Nos.  2021GXNSFFA196004, 2020GXNSFBA297137 and 2018GXNSFAA281353, and the Ministry of Science and Higher Education of Republic of Poland under Grants Nos. 4004/GGPJII/H2020/2018/0 and 440328/PnH2/2019.

\section*{Data availability statements}

Data sharing not applicable to this article as no data sets were generated or analysed during the current study.

\end{document}